\documentclass[final,3p,times]{elsarticle}

\usepackage{graphicx}%
\usepackage{multirow}%
\usepackage{amsmath,amssymb,amsfonts}%
\usepackage{amsthm}%
\usepackage{mathrsfs}%
\usepackage[title]{appendix}%
\usepackage{xcolor}%
\usepackage{textcomp}%
\usepackage{manyfoot}%
\usepackage{booktabs}%
\usepackage{algorithm}%
\usepackage{algorithmicx}%
\usepackage{algpseudocode}%
\usepackage{listings}%
\usepackage{xcolor}
\newcommand{\corr}[1]{\textcolor{black}{#1}}

\usepackage{tikz}
\usepackage{pgfplots}
\pgfplotsset{compat=newest}

\theoremstyle{definition}
\newtheorem{definition}{Definition}

\theoremstyle{plain}
\newtheorem{theorem}{Theorem}[section]
\newtheorem{lemma}{Lemma}[section]
\newtheorem{remark}{Remark}[section]

\newtheorem{example}{Example}

\numberwithin{equation}{section}

\usepackage[english]{babel}

\journal{Applied Numerical Mathematics}

\begin{document}

\begin{frontmatter}

\title{A nonstandard fractional Jacobi pseudospectral method for weakly regular right-sided Caputo terminal problems}

\author[f1]{Ibrahem G. Ameen}
  \ead{ibrahem\_gamal.201@azhar.edu.eg}

\author[f2]{Magda Hammad}
  \ead{magda.hammad@science.bsu.edu.eg}
\author[f3]{Mohsen Razzaghi}
  \ead{razzaghi@math.msstate.edu}
\author[f4]{Mahmoud A. Zaky$^{*,}$}
  \ead{ma.zaky@yahoo.com;mibrahimm@imamu.edu.sa}

\address[f1]{Department of Mathematics, Faculty of Science, Al-Azhar University, Cairo, Egypt}

\address[f2]{Mathematics and Computer Science Department, Faculty of Science, Beni-Suef University, Beni-Suef 62511, Egypt}

\address[f3]{Department of Mathematics and Statistics, Mississippi State University,   Mississippi State, MS 39762, USA}

\address[f4]{Department of Mathematics and Statistics, College of Science, Imam Mohammad Ibn Saud Islamic University (IMSIU), Riyadh, Saudi Arabia}

\cortext[mycorrespondingauthor]{Corresponding author: M.A. Zaky}

\begin{abstract}
Right-sided fractional differential equations arising in terminal value problems present substantial analytical and numerical challenges due to their intrinsic nonlocal structure and the presence of weak algebraic-type singularities near the terminal point. These singular behaviors significantly degrade the performance of classical finite-difference, finite-element, and standard spectral methods, which typically rely on smoothness assumptions that break down in the fractional setting. In this work, we develop a comprehensive fractional pseudospectral framework specifically designed for nonlinear right-sided Caputo terminal value systems. The proposed method employs a versatile family of nonstandard fractional Jacobi basis functions whose endpoint-weighted structure naturally reflects the asymptotic behavior of right-sided fractional operators. This construction enables the accurate approximation of solutions with limited regularity near the terminal point, without requiring smoothing transformations or mesh grading. We establish the existence and uniqueness of the discrete numerical solution and derive rigorous convergence estimates that faithfully capture the inherent singularity structure of the problem. A series of numerical experiments validates the theoretical developments and demonstrates that the proposed approach delivers high-order accuracy and superior resolution of terminal-layer behavior compared with traditional spectral techniques. The resulting framework provides an efficient and reliable computational tool for challenging nonlinear fractional terminal value systems characterized by weak solution regularity.

\end{abstract}

\begin{keyword}
fractional collocation method \sep right-sided Caputo derivative \sep nonstandard Jacobi functions \sep weakly singular solutions \sep convergence analysis
%\textbf{MSC:}  	 35R11,65M70,65R20,65L60 
\end{keyword}

\end{frontmatter}

%\linenumbers
\section{Introduction}

Fractional-order systems are dynamical systems governed by differential equations involving derivatives of non-integer order. Such systems naturally incorporate memory and hereditary effects, which makes them effective tools for describing real-world processes whose future states depend not only on the current configuration but also on the full evolution of the system \cite{podlubny1998fractional}. In contrast to classical integer-order models, fractional formulations offer greater flexibility in capturing anomalous diffusion, long-range temporal dependence, and power-law behavior. A distinctive feature of many fractional-order models is that their solutions often possess low regularity or nonsmooth behavior, even when the coefficients and source terms are smooth. This phenomenon is closely related to the nonlocal character of fractional derivatives, since the solution at a given point depends on the entire history of the process.

For right-sided fractional differential equations, the singular behavior typically appears near the terminal point of the interval. Indeed, for right-sided operators, the kernel $(s-y)^{-\beta}$ becomes weakly singular as $y \to 1^{-}$. Consequently, the solution frequently exhibits endpoint singularities, often of algebraic type, near the terminal boundary. Such singular behavior prevents the solution from being classically differentiable up to the endpoint and may lead to sharp gradients or reduced smoothness. As a result, standard numerical methods often suffer from loss of accuracy and nonuniform convergence when applied to these problems \cite{tarasov2011fractional,zayernouri2013fractional}.

These weak singularities have a substantial effect on numerical approximation. Classical spectral and finite-difference methods may experience serious order deterioration, or even fail to converge efficiently, unless suitable modifications are introduced. To resolve the asymptotic behavior near the right endpoint, one must employ specialized basis functions, graded meshes, or fractional polynomial spaces that intrinsically reflect the endpoint singular structure. Such strategies are essential for achieving high-order accuracy, stability, and robustness when solving right-sided fractional models with limited regularity near the terminal boundary \cite{zayernouri1945spectral}.

Recent developments in numerical analysis have produced several accurate and efficient schemes for integral and differential equations involving nonlocal operators and low-regularity solutions. Mokhtary \cite{mokhtary2020computational} developed a Jacobi spectral Galerkin method for Volterra--Hammerstein integral equations with weakly singular kernels and proportional delay, obtaining high spectral accuracy and favorable convergence properties. For boundary value problems with singular solutions, Sgura and Simoncini \cite{sgura2014finite} introduced high-order exponentially convergent discrete fractional techniques based on a hybrid strategy capable of treating both known and unknown singularities. Caboussat and Rappaz \cite{caboussat2022least} proposed a least-squares/relaxation finite element method for the prescribed Jacobian equation, combining local nonlinear iterations with variational linear solvers to achieve accurate and stable numerical approximations. Zaky \cite{zaky2020accurate} established an adaptive spectral collocation framework for nonlinear rational-order fractional initial value problems by employing a smoothing transformation that resolves initial singularities and recovers exponential convergence.

Further progress has been reported for systems and variable-coefficient fractional problems. Faghih and Mokhtary \cite{faghih2022non} investigated nonlinear systems of multi-order fractional differential equations, established well-posedness, and designed a high-order operational spectral Galerkin method with spectral accuracy and reduced computational complexity. Zheng et al. \cite{zheng2020numerical} proposed and analyzed a spectral method for variable-coefficient fractional diffusion equations by transforming them into equivalent constant-coefficient problems and deriving rigorous error estimates. In a related direction, stable and efficient spectral Galerkin schemes based on Müntz--Jacobi functions were introduced in \cite{shiri2021terminal,faghih2025spectral}. Zaky et al. \cite{zaky2025improved,zaky2020semi} developed nonstandard spectral collocation methods based on the second-kind fractional Jacobi functions (FJFs2) for nonlinear fractional differential equations, achieving high-order accuracy for low-regularity solutions through weighted $\mathcal{L}_2$ analysis and extensive numerical validation. Moreover, Zaky and Ameen \cite{zaky2020priori,ameen2021pseudo,zaky2024non} introduced a Legendre--Jacobi collocation approach for nonlinear systems of two-point boundary value problems formulated through equivalent Volterra--Fredholm integral equations, and established spectral convergence for both smooth and nonsmooth solutions. Tian et al. \cite{tian2022high} developed a high-order finite-difference method for right-sided Caputo (R-SC)  fractional differential equations. Zhou and Xu \cite{zhou2016well} analyzed a nonlinear coupled system of fractional differential equations and proved existence and uniqueness by transforming the problem into an equivalent system of integral equations and applying fixed-point arguments. \textcolor{black}{It is worth noting that fast time-stepping methods have been developed for
left-sided (initial-value type) Caputo derivatives in order to alleviate the
memory and computational burden associated with their nonlocal, history-dependent
structure; see, e.g., \cite{jiang2017fast,yan2017fast,zhang2022fast,fang2020fast,huang2022unified}.
The present method, being a global pseudospectral scheme rather than a
step-by-step marching procedure, is formulated in a fundamentally different manner
and does not rely on a time-history recursion of this type. Nevertheless, whether
fast evaluation strategies analogous to those developed for left-sided operators
can be formulated for right-sided fractional operators of the type considered in
this work remains a question of independent interest, which we comment on further
in the concluding section.}

Despite these advances, fractional terminal value problems remain particularly challenging from the numerical point of view. The presence of R-SC derivatives generates nonlocal memory effects and typically induces intrinsic singularities near the terminal point, even when the forcing terms are smooth. These singularities significantly reduce the effectiveness of standard discretization techniques and often cause severe order reduction. Although spectral methods are well known for their high accuracy, their performance deteriorates in the presence of endpoint singularities because global polynomial bases are not well suited to approximating functions with limited regularity. The difficulty becomes even greater for coupled nonlinear systems, where the singular behavior propagates through all components of the solution. Therefore, it is necessary to construct approximation spaces and discretization strategies that are specifically tailored to terminal singularities while maintaining high-order accuracy for nonsmooth fractional dynamics.

In this work, we study the following nonlinear fractional terminal value system \cite{jankowski2015systems}:
\textcolor{black}{\begin{equation}\label{eq1.1} \begin{cases} {}_{t}^{C}D_{1}^{\beta}\, U_1(t) = F_1\!\big(t, U_1(t), U_2(t), \ldots, U_Q(t)\big), \quad U_1(1)=0,\\[4pt] {}_{t}^{C}D_{1}^{\beta}\, U_2(t) = F_2\!\big(t, U_1(t), U_2(t), \ldots, U_Q(t)\big), \quad U_2(1)=0,\\[4pt] \hspace{1.2cm}\vdots \\[4pt] {}_{t}^{C}D_{1}^{\beta}\, U_Q(t) = F_Q\!\big(t, U_1(t), U_2(t), \ldots, U_Q(t)\big), \quad U_Q(1)=0, \end{cases} \end{equation}}
%\begin{equation}\label{eq1.1} \begin{cases} {}_{y}^{C}D_{1}^{\beta} U_q(y) = F_q\!\big(y,U_1(y),U_2(y),\ldots,U_Q(y)\big), \quad y\in [0,1), \quad q=1,2,\ldots,Q,\\[4pt] U_q(1)=0, \quad q=1,2,\ldots,Q, \end{cases} \end{equation}
where $\beta\in(0,1)$. Here, the R-SC fractional derivative is defined by
\begin{equation}\label{eq1.2a}
{}_{t}^{C}D_{1}^{\beta}U(t)
= -\frac{1}{\Gamma(1-\beta)}
\int_{t}^{1}(s-t)^{-\beta}U'(s)\,ds.
\end{equation}

\textcolor{black}{\begin{remark}
The homogeneous terminal condition in \eqref{eq1.1} is assumed without loss of
generality. For a non-homogeneous condition $U_q(1)=U_q^{T}$, the shift
$\widetilde U_q:=U_q-U_q^{T}$ reduces \eqref{eq1.1} to a system of identical form
with $\widetilde U_q(1)=0$, since ${}_{t}^{C}D_{1}^{\beta}U_q^{T}=0$, and the
shifted right-hand side inherits the smoothness and Lipschitz properties of $F_q$.
Hence Theorem~\ref{the22} and Theorem~\ref{thm:exist-unique} apply unchanged, with
$U_q(t)=\widetilde U_q(t)+U_q^{T}$. We adopt the homogeneous case throughout for
notational simplicity.
\end{remark}}

Building on the high-order fractional collocation framework developed in \cite{zaky2025high}, the present work extends that methodology to fully coupled nonlinear terminal systems and introduces a more flexible nonstandard Jacobi basis capable of resolving the severe terminal singularities arising in right-sided fractional dynamics. The main objective is to construct a robust and highly accurate numerical method for solving the nonlinear system \eqref{eq1.1}. In particular, the proposed approach is designed to accommodate terminal singularities naturally, retain high-order accuracy for nonsmooth solutions, and provide a rigorous convergence analysis for the coupled nonlinear setting. To achieve this, we develop a fractional spectral collocation method based on nonstandard fractional Jacobi basis functions. Owing to their endpoint-weighted structure, these basis functions are well matched to the asymptotic behavior of right-sided fractional operators and therefore provide an effective approximation space for solutions of limited regularity. As a consequence, the proposed method overcomes the principal limitations of standard polynomial-based spectral schemes and yields improved accuracy and resolution near the terminal layer.

The remainder of this paper is organized as follows. Section~\ref{sec2} introduces the analytical framework and the essential properties of the FJFs2 that form the basis of the approximation space. Section~\ref{sec3} presents the construction of the proposed spectral collocation scheme and its discrete formulation. Section~\ref{sec:conv} is devoted to the convergence analysis. In Section~\ref{secx5}, we prove that the resulting discrete problem admits a unique numerical solution. Finally, Section~\ref{sec6} reports numerical experiments that confirm the theoretical analysis and demonstrate the accuracy, stability, and computational effectiveness of the proposed method.

\section{Properties of FJFs2}\label{sec2}

In this section, we introduce the family of FJFs2 and summarize their main analytical properties. Throughout, we work on the interval $
\chi:=(0,1),$ 
and fix a fractional scaling parameter $\mu\in(0,1]$. We begin with the FJFs2 space generated by fractional powers of $(1-\varsigma)$:
\begin{equation}\label{spacLR}
\mathbb{S}_{i}^{\mu}
:= \mathrm{span}\big\{1,(1-\varsigma)^{\mu},\ldots,(1-\varsigma)^{i\mu}\big\},
\qquad
\mathbb{S}^{\mu}:=\bigcup_{i=0}^{\infty}\mathbb{S}_{i}^{\mu}.
\end{equation}

\begin{definition}
Let $\zeta,\kappa>-1$ and $\mu\in(0,1]$. The $i$th-order FJFs2 on $\chi$ is defined by
\begin{equation}
\mathfrak{P}_{i}^{\zeta,\kappa,\mu}(\varsigma)
:= \mathscr{P}_{i}^{\zeta,\kappa}\!\big(1-2(1-\varsigma)^{\mu}\big)
= \sum_{k=0}^{i}
\mathfrak{a}_{k,i}^{\zeta,\kappa}\,(1-\varsigma)^{\mu k},
\qquad i=0,1,2,\ldots,\quad \varsigma\in\chi.
\end{equation}
In the special case $\mu=1$, the above family reduces to the classical shifted Jacobi polynomials:
\[
\mathfrak{P}_{i}^{\zeta,\kappa,1}(\varsigma)
= \mathscr{P}_{i}^{\zeta,\kappa}(2\varsigma-1).
\]
The coefficients $\mathfrak{a}_{k,i}^{\zeta,\kappa}$ are given explicitly by
\begin{equation}
\mathfrak{a}_{k,i}^{\zeta,\kappa}
=
\frac{(-1)^k\,\Gamma(i+\zeta+1)\,
\Gamma(i+k+\zeta+\kappa+1)}
{k!\,(i-k)!\,\Gamma(i+\zeta+\kappa+1)\,
\Gamma(k+\zeta+1)}.
\end{equation}
\end{definition}

\begin{definition}
Let
\[
\eta = 1-(1-\varsigma)^{\mu},
\qquad \varsigma\in\chi.
\]
For a sufficiently smooth function $\Phi$ on $\chi$, we define the first transformed derivative by
\begin{equation*}
\mathfrak{D}_{\mu}^{(1)}\Phi(\varsigma)
:=
\frac{d}{d\eta}\Big(\Phi\big(1-(1-\eta)^{1/\mu}\big)\Big)\Big|_{\eta=1-(1-\varsigma)^{\mu}}
=
\frac{1}{\mu(1-\varsigma)^{\mu-1}}\,
\frac{d\Phi(\varsigma)}{d\varsigma}.
\end{equation*}
For any integer $s\ge1$, the higher-order transformed derivatives are defined recursively by
\begin{equation*}
\mathfrak{D}_{\mu}^{(s)}\Phi(\varsigma)
:=
\underbrace{\mathfrak{D}_{\mu}^{(1)}
\circ \mathfrak{D}_{\mu}^{(1)}
\circ \cdots \circ
\mathfrak{D}_{\mu}^{(1)}}_{s\text{ times}}\Phi(\varsigma).
\end{equation*}
Thus, $\mathfrak{D}_{\mu}^{(s)}$ represents the $s$th-order derivative induced by the fractional change of variables.
\end{definition}
\begin{lemma}
The FJFs2
$\{\mathfrak{P}_{i}^{\zeta,\kappa,\mu}(\varsigma)\}_{i\ge0}$
constitute an orthogonal system on $\chi$ with respect to the weight function
\[
\varpi^{\zeta,\kappa,\mu}(\varsigma)
=
\mu\bigl(1-(1-\varsigma)^{\mu}\bigr)^{\kappa}
(1-\varsigma)^{(\zeta+1)\mu-1},
\qquad
\zeta,\kappa>-1,\quad 0<\mu\le 1.
\]
That is, for all $i,m\ge0$,
\begin{equation}\label{dddddddl.1}
\int_{\chi}
\mathfrak{P}_{i}^{\zeta,\kappa,\mu}(\varsigma)\,
\mathfrak{P}_{m}^{\zeta,\kappa,\mu}(\varsigma)\,
\varpi^{\zeta,\kappa,\mu}(\varsigma)\,d\varsigma
=
\mathfrak{N}_{i}^{\zeta,\kappa}\,\delta_{im},
\end{equation}
where $\delta_{im}$ is the Kronecker delta, and
\[
\mathfrak{N}_{i}^{\zeta,\kappa}
:=
\left\|\mathfrak{P}_{i}^{\zeta,\kappa,\mu}\right\|_{\varpi^{\zeta,\kappa,\mu}}^{2}
=
\frac{\Gamma(i+\kappa+1)\Gamma(i+\zeta+1)}
{i!\,\Gamma(i+\zeta+\kappa+1)\,(2i+\zeta+\kappa+1)}.
\]
Here,
$\|\cdot\|_{\varpi^{\zeta,\kappa,\mu}}$
denotes the weighted $\mathcal{L}^{2}(\chi)$-norm induced by the weight
$\varpi^{\zeta,\kappa,\mu}$.
\end{lemma}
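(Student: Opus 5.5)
The plan is to reduce the statement to the classical orthogonality of the Jacobi polynomials $\mathscr{P}_{i}^{\zeta,\kappa}$ on $(-1,1)$ with respect to $(1-x)^{\zeta}(1+x)^{\kappa}$. This uses a single monotone change of variables, namely the one built into the definition $\mathfrak{P}_{i}^{\zeta,\kappa,\mu}(\varsigma)=\mathscr{P}_{i}^{\zeta,\kappa}\big(1-2(1-\varsigma)^{\mu}\big)$. We recall the standard identity
\[
\int_{-1}^{1}\mathscr{P}_{i}^{\zeta,\kappa}(x)\mathscr{P}_{m}^{\zeta,\kappa}(x)(1-x)^{\zeta}(1+x)^{\kappa}\,dx
=\frac{2^{\zeta+\kappa+1}\Gamma(i+\zeta+1)\Gamma(i+\kappa+1)}{(2i+\zeta+\kappa+1)\,i!\,\Gamma(i+\zeta+\kappa+1)}\,\delta_{im},
\]
valid for $\zeta,\kappa>-1$. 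We use the normalization of $\mathscr{P}_{i}^{\zeta,\kappa}$ that is consistent with the coefficients $\mathfrak{a}_{k,i}^{\zeta,\kappa}$; that is, $\mathscr{P}_{i}^{\zeta,\kappa}(1)=\binom{i+\zeta}{i}$, as the $k=0$ coefficient confirms.

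Next set $x=1-2(1-\varsigma)^{\mu}$. For $0<\mu\le1$ this is a $C^1$, strictly increasing bijection from $\chi=(0,1)$ onto $(-1,1)$. Since $dx/d\varsigma=2\mu(1-\varsigma)^{\mu-1}>0$, no orientation issues arise. We then compute
\[
1-x=2(1-\varsigma)^{\mu},\qquad 1+x=2\bigl(1-(1-\varsigma)^{\mu}\bigr),\qquad dx=2\mu(1-\varsigma)^{\mu-1}\,d\varsigma .
\]
Hence
\[
(1-x)^{\zeta}(1+x)^{\kappa}\,dx=2^{\zeta+\kappa+1}\,\mu\bigl(1-(1-\varsigma)^{\mu}\bigr)^{\kappa}(1-\varsigma)^{(\zeta+1)\mu-1}\,d\varsigma=2^{\zeta+\kappa+1}\varpi^{\zeta,\kappa,\mu}(\varsigma)\,d\varsigma .
\]
Substituting this into the classical integral turns its left-hand side into $2^{\zeta+\kappa+1}$ times the integral in \eqref{dddddddl.1}. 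Dividing by $2^{\zeta+\kappa+1}$ yields both the Kronecker delta and exactly the stated constant $\mathfrak{N}_{i}^{\zeta,\kappa}$.

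The only point needing care is justifying the substitution for improper integrals. The weight may be singular at $\varsigma=0$ (where it behaves like $\varsigma^{\kappa}$) and at $\varsigma=1$ (where it behaves like $(1-\varsigma)^{(\zeta+1)\mu-1}$). The integrands are nonnegative weights times polynomials in $x$, which are bounded, and the classical integral converges absolutely for $\zeta,\kappa>-1$. Therefore the change of variables theorem for Lebesgue integrals (or a limit over compact subintervals) applies directly. I expect this bookkeeping, together with matching the normalization of $\mathscr{P}_i^{\zeta,\kappa}$ to the given $\mathfrak{a}_{k,i}^{\zeta,\kappa}$, to be the only mild obstacle; the rest is a direct computation.
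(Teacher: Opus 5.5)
Your proof is correct. The substitution $x=1-2(1-\varsigma)^{\mu}$ turns $(1-x)^{\zeta}(1+x)^{\kappa}\,dx$ into $2^{\zeta+\kappa+1}\varpi^{\zeta,\kappa,\mu}(\varsigma)\,d\varsigma$, and the coefficients $\mathfrak{a}_{k,i}^{\zeta,\kappa}$ are exactly the standard Jacobi normalization, so dividing the classical norm by $2^{\zeta+\kappa+1}$ gives $\mathfrak{N}_{i}^{\zeta,\kappa}$. The paper states this lemma without proof, and your reduction is essentially the intended one: it is the change of variables built into the FJFs2 definition and, in shifted form $\eta=1-(1-\varsigma)^{\mu}$, the one used in the proof of the next lemma.
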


Let $\{\varsigma_{i}^{\zeta,\kappa,\mu},\,w_{i}^{\zeta,\kappa,\mu}\}_{i=0}^{M}$ be the FJFs2--Gauss quadrature nodes and corresponding weights associated with the weight function $\varpi^{\zeta,\kappa,\mu}$. Then, for every $\Psi\in\mathbb{S}_{2M+1}^{\mu}$, the following quadrature formula holds:
\begin{equation}\label{3.3a}
\int_{\chi}\Psi(\varsigma)\,
\varpi^{\zeta,\kappa,\mu}(\varsigma)\,d\varsigma
=
\sum_{i=0}^{M}
\Psi\big(\varsigma_{i}^{\zeta,\kappa,\mu}\big)\,
w_{i}^{\zeta,\kappa,\mu}.
\end{equation}
In particular, the discrete orthogonality relation reads
\begin{equation}\label{3.4a}
\sum_{k=0}^{M}
\mathfrak{P}_{i}^{\zeta,\kappa,\mu}\big(\varsigma_{k}^{\zeta,\kappa,\mu}\big)\,
\mathfrak{P}_{j}^{\zeta,\kappa,\mu}\big(\varsigma_{k}^{\zeta,\kappa,\mu}\big)\,
w_{k}^{\zeta,\kappa,\mu}
=
\mathfrak{N}_{i}^{\zeta,\kappa}\,\delta_{ij},
\qquad 0\le i+j\le 2M+1.
\end{equation}

We next define the associated interpolation operator.

\begin{definition}
The FJFs2--Gauss interpolation operator $\mathcal{I}_{\varsigma,M}^{\zeta,\kappa,\mu}$ is defined by the nodal conditions
\begin{equation}\label{35a}
\mathcal{I}_{\varsigma,M}^{\zeta,\kappa,\mu}\Phi
\big(\varsigma_{s}^{\zeta,\kappa,\mu}\big)
=
\Phi\big(\varsigma_{s}^{\zeta,\kappa,\mu}\big),
\qquad 0\le s\le M.
\end{equation}
For any $\Phi\in\mathbb{S}_{M}^{\mu}$, one has
\[
\mathcal{I}_{\varsigma,M}^{\zeta,\kappa,\mu}\Phi=\Phi.
\]
Since $\mathcal{I}_{\varsigma,M}^{\zeta,\kappa,\mu}\Phi\in\mathbb{S}_{M}^{\mu}$, it admits the expansion
\begin{equation}\label{36a}
\mathcal{I}_{\varsigma,M}^{\zeta,\kappa,\mu}\Phi(\varsigma)
=
\sum_{i=0}^{M}
\widehat{\Phi}_{i}^{\zeta,\kappa,\mu}\,
\mathfrak{P}_{i}^{\zeta,\kappa,\mu}(\varsigma),
\end{equation}
where
\begin{equation}
\widehat{\Phi}_{i}^{\zeta,\kappa,\mu}
=
\frac{1}{\mathfrak{N}_{i}^{\zeta,\kappa}}
\sum_{j=0}^{M}
\Phi\big(\varsigma_{j}^{\zeta,\kappa,\mu}\big)\,
\mathfrak{P}_{i}^{\zeta,\kappa,\mu}\big(\varsigma_{j}^{\zeta,\kappa,\mu}\big)\,
w_{j}^{\zeta,\kappa,\mu}.
\end{equation}
\end{definition}

To measure approximation errors, we introduce an FJFs2-weighted Sobolev-type space.

\begin{definition}
For $p\in\mathbb{N}$, define
\begin{equation}
\mathcal{H}_{\zeta,\kappa}^{p,\mu}(\chi)
:=
\left\{
\Phi:
\mathfrak{D}_{\mu}^{(i)}\Phi
\in \mathcal{L}^{2}_{\varpi^{\zeta+i,\kappa+i,\mu}}(\chi),
\quad 0\le i\le p
\right\},
\end{equation}
where $\mathcal{L}^{2}_{\varpi^{\zeta+i,\kappa+i,\mu}}(\chi)$ denotes the weighted $\mathcal{L}^{2}$-space on $\chi$ with weight $\varpi^{\zeta+i,\kappa+i,\mu}$. The associated inner product, seminorm, and norm are defined by
\begin{equation}
\begin{split}
(\Psi,\Phi)_{\mathcal{H}_{\zeta,\kappa}^{p,\mu}}
&:=
\sum_{i=0}^{p}
\big(
\mathfrak{D}_{\mu}^{(i)}\Psi,
\mathfrak{D}_{\mu}^{(i)}\Phi
\big)_{\varpi^{\zeta+i,\kappa+i,\mu}},\\[3pt]
|\Phi|_{\mathcal{H}_{\zeta,\kappa}^{p,\mu}}
&:=
\big\|
\mathfrak{D}_{\mu}^{(p)}\Phi
\big\|_{\varpi^{\zeta+p,\kappa+p,\mu}},\\[3pt]
\|\Phi\|_{\mathcal{H}_{\zeta,\kappa}^{p,\mu}}
&:=
(\Phi,\Phi)_{\mathcal{H}_{\zeta,\kappa}^{p,\mu}}^{1/2}.
\end{split}
\end{equation}
\end{definition}

For $\mu=1$, this reduces to the classical Jacobi-weighted Sobolev space on $\chi$: \begin{equation} \mathcal{H}_{\zeta,\kappa}^{s,1}(\chi) := \left\{ \Phi: \partial_{\varsigma}^{\,i}\Phi \in \mathcal{L}^{2}_{\varpi^{\zeta+i,\kappa+i,1}}(\chi), \quad 0\le i\le s \right\}, \qquad s\in\mathbb{N}. \end{equation}
\textcolor{black}{Here, $\partial_{\varsigma}^{\,i}\Phi$ denotes the $i$th-order ordinary derivative of $\Phi$ with respect to the variable $\varsigma$.}

The next lemma relates the fractional space $\mathcal{H}_{\zeta,\kappa}^{r,\mu}(\chi)$ to its classical counterpart through the fractional change of variables.

\begin{lemma}
A function $\Phi \in \mathcal{H}_{\zeta,\kappa}^{r,\mu}(\chi)$ if and only if
\[
\widetilde{\Phi}(\varsigma)
:=
\Phi\big(1-(1-\varsigma)^{1/\mu}\big)
\in \mathcal{H}_{\zeta,\kappa}^{r,1}(\chi).
\]
\end{lemma}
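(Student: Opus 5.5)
The plan is to make the change of variables $\eta = 1-(1-\varsigma)^{\mu}$ explicit and show that it carries every ingredient of the $\mathcal{H}_{\zeta,\kappa}^{r,\mu}$ norm onto the corresponding ingredient of the classical $\mathcal{H}_{\zeta,\kappa}^{r,1}$ norm. Write $\varsigma = 1-(1-\eta)^{1/\mu}$, so that $\widetilde{\Phi}(\eta)=\Phi(\varsigma)$. This map is a smooth, strictly increasing bijection of $\chi$ onto itself, with
\[
d\eta = \mu(1-\varsigma)^{\mu-1}\,d\varsigma .
\]

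\emph{Derivatives.} By the definition of $\mathfrak{D}_{\mu}^{(1)}$,
\[
\mathfrak{D}_{\mu}^{(1)}\Phi(\varsigma)=\frac{d\widetilde{\Phi}}{d\eta}(\eta)\Big|_{\eta=1-(1-\varsigma)^{\mu}} .
\]
Induction on $s$ then gives
\[
\mathfrak{D}_{\mu}^{(s)}\Phi(\varsigma)=\partial_{\eta}^{\,s}\widetilde{\Phi}(\eta)\Big|_{\eta=1-(1-\varsigma)^{\mu}} .
\]
The inductive step is just the observation that $\mathfrak{D}_{\mu}^{(s)}\Phi$ is itself the pullback of $\partial_\eta^{s}\widetilde\Phi$, so applying $\mathfrak{D}_{\mu}^{(1)}$ to it differentiates once more in $\eta$. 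For the low-regularity functions in these spaces, I would interpret the derivatives in the weak/distributional sense. Pullback by a $C^\infty$ diffeomorphism of the open interval commutes with weak differentiation, so the identity still holds.

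\emph{Weights.} Substituting into the weight gives
\[
\varpi^{\zeta+i,\kappa+i,\mu}(\varsigma)\,d\varsigma
=\mu\,\eta^{\kappa+i}(1-\eta)^{\zeta+i+1-1/\mu}(1-\varsigma)^{-\mu+1}\cdots
\]
More cleanly, $(1-\varsigma)^{(\zeta+i+1)\mu-1}\,d\varsigma=\mu^{-1}(1-\eta)^{\zeta+i}\,d\eta$. Hence
\[
\varpi^{\zeta+i,\kappa+i,\mu}(\varsigma)\,d\varsigma=\eta^{\kappa+i}(1-\eta)^{\zeta+i}\,d\eta=\varpi^{\zeta+i,\kappa+i,1}(\eta)\,d\eta .
\]
Combining the two facts yields, for each $0\le i\le r$,
\[
\big\|\mathfrak{D}_{\mu}^{(i)}\Phi\big\|_{\varpi^{\zeta+i,\kappa+i,\mu}}=\big\|\partial_\eta^{\,i}\widetilde{\Phi}\big\|_{\varpi^{\zeta+i,\kappa+i,1}} .
\]
Each term on one side is finite exactly when the corresponding term on the other side is. This gives both directions of the equivalence at once, and in fact it shows that $\Phi\mapsto\widetilde\Phi$ is an isometric isomorphism between the two spaces.

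\emph{Main obstacle.} The only delicate point is justifying that $\mathfrak{D}_\mu^{(i)}$ commutes with the pullback in the weak sense. The Jacobian $(1-\varsigma)^{\mu-1}$ degenerates at $\varsigma=1$, so I would check the identity against test functions compactly supported in $\chi$. There the map is a genuine smooth diffeomorphism, and the chain rule for weak derivatives applies. Since the spaces are defined through local derivatives on the open interval, with integrability only measured by the weights, no boundary terms arise. The endpoint degeneracy is then fully absorbed by the exact weight identity above.
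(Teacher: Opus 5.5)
Your proposal is correct and follows the paper's proof. The substitution $\eta=1-(1-\varsigma)^{\mu}$ turns $\mathfrak{D}_{\mu}^{(i)}\Phi$ into $\partial_{\eta}^{\,i}\widetilde{\Phi}$ and $\varpi^{\zeta+i,\kappa+i,\mu}(\varsigma)\,d\varsigma$ into $\varpi^{\zeta+i,\kappa+i,1}(\eta)\,d\eta$, so the weighted norms coincide term by term for $0\le i\le r$. Your extra justification of the weak chain rule on compact subsets of $\chi$ is a detail the paper omits, and you should delete the garbled first weight display that precedes ``More cleanly''.
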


\begin{proof}
Set
\[
\eta = 1-(1-\varsigma)^{\mu}, \qquad \varsigma\in(0,1).
\]
Then
\[
\varsigma = 1-(1-\eta)^{1/\mu},
\qquad
d\eta = \mu(1-\varsigma)^{\mu-1}d\varsigma.
\]
For $0\le s\le r$, we obtain
\begin{equation}
\begin{split}
\big\|\mathfrak{D}_{\mu}^{(s)}\Phi\big\|_{\varpi^{\zeta+s,\kappa+s,\mu}}^{2}
&=
\int_{0}^{1}
\varpi^{\zeta+s,\kappa+s,\mu}(\varsigma)\,
\big(\mathfrak{D}_{\mu}^{(s)}\Phi(\varsigma)\big)^{2}\,d\varsigma\\
&=
\int_{0}^{1}
\varpi^{\zeta+s,\kappa+s,1}(\eta)\,
\big(\partial_{\eta}^{\,s}\widetilde{\Phi}(\eta)\big)^{2}\,d\eta\\
&=
\big\|\partial_{\eta}^{\,s}\widetilde{\Phi}\big\|_{\varpi^{\zeta+s,\kappa+s,1}}^{2}.
\end{split}
\end{equation}
Hence, the norms are equivalent, and the result follows.
\end{proof}

\begin{lemma}(\cite{zaky2024non})\label{lemma41}
Let
\[
\widetilde{\Phi}(\varsigma):=\Phi\big(1-(1-\varsigma)^{1/\mu}\big)
\in \mathcal{H}_{\zeta,\kappa}^{j,1}(\chi),
\quad \textcolor{black}{1\le j\le M+1}.
\]
Then the interpolation error estimate
\begin{equation}\label{s77}
\big\|
\Phi-\mathcal{I}_{\varsigma,M}^{\zeta,\kappa,\mu}\Phi
\big\|_{\varpi^{\zeta,\kappa,\mu}}
\le
C\,M^{-j}
\big\|
\partial_{\varsigma}^{\,j}\widetilde{\Phi}
\big\|_{\varpi^{\zeta+j,\kappa+j,1}}
\end{equation}
holds, where $C>0$ is a constant independent of $M$ and $\Phi$.
\end{lemma}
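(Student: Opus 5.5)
The plan is to reduce Lemma~\ref{lemma41} to the classical Jacobi--Gauss interpolation estimate through the change of variables $\eta=1-(1-\varsigma)^{\mu}$, exactly as in the proof of the preceding lemma. The central observation is that this map conjugates the whole FJFs2 framework to the classical one. Under $\varsigma\mapsto\eta$ we have $(1-\varsigma)^{\mu k}=(1-\eta)^{k}$. Hence $\mathbb{S}_{M}^{\mu}$ corresponds exactly to the polynomial space $\mathbb{P}_{M}$ in $\eta$, and $\mathfrak{P}_{i}^{\zeta,\kappa,\mu}(\varsigma)=\mathscr{P}_{i}^{\zeta,\kappa}(1-2(1-\eta))=\mathscr{P}_{i}^{\zeta,\kappa}(2\eta-1)$.

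Next I would transform the weight. Since $d\eta=\mu(1-\varsigma)^{\mu-1}d\varsigma$, we get $\varpi^{\zeta,\kappa,\mu}(\varsigma)\,d\varsigma=\eta^{\kappa}(1-\eta)^{\zeta}\,d\eta=\varpi^{\zeta,\kappa,1}(\eta)\,d\eta$, up to the paper's normalization of the shifted Jacobi weight. Consequently the FJFs2--Gauss nodes are images of the shifted Jacobi--Gauss nodes, $\varsigma_{s}^{\zeta,\kappa,\mu}=1-(1-\eta_{s}^{\zeta,\kappa})^{1/\mu}$, and the weights coincide. This holds because the quadrature \eqref{3.3a} pulls back to the classical Gauss rule of exactness degree $2M+1$, and Gauss rules are unique.

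From the nodal conditions \eqref{35a} and the uniqueness of the interpolant in $\mathbb{S}_M^{\mu}$, I then deduce the intertwining identity
\[
\big(\mathcal{I}_{\varsigma,M}^{\zeta,\kappa,\mu}\Phi\big)\big(1-(1-\eta)^{1/\mu}\big)=\big(\mathcal{I}_{\eta,M}^{\zeta,\kappa}\widetilde{\Phi}\big)(\eta).
\]
Here $\mathcal{I}_{\eta,M}^{\zeta,\kappa}$ is the classical shifted Jacobi--Gauss interpolant. Both sides are polynomials of degree at most $M$ in $\eta$ that agree with $\widetilde\Phi$ at the $M+1$ classical nodes.

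Changing variables in the error norm gives
\[
\|\Phi-\mathcal{I}_{\varsigma,M}^{\zeta,\kappa,\mu}\Phi\|_{\varpi^{\zeta,\kappa,\mu}}=\|\widetilde\Phi-\mathcal{I}_{\eta,M}^{\zeta,\kappa}\widetilde\Phi\|_{\varpi^{\zeta,\kappa,1}}.
\]
I would then invoke the classical Jacobi--Gauss interpolation estimate in weighted $L^2$ (e.g.\ Shen--Tang--Wang, Theorem 3.41, transferred to $(0,1)$ by an affine map). It states that $\|u-I_Mu\|_{\omega^{\zeta,\kappa}}\le CM^{-j}\|\partial^{j}u\|_{\omega^{\zeta+j,\kappa+j}}$ for $u\in B^{j}_{\zeta,\kappa}$ with $1\le j\le M+1$. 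The seminorm on the right is exactly $\|\partial^{j}\widetilde\Phi\|_{\varpi^{\zeta+j,\kappa+j,1}}$, which gives \eqref{s77}.

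The main obstacle is the hypothesis of the classical estimate. In its sharp form that estimate requires membership in the nonuniformly weighted space, namely control of all derivatives $\partial^{l}\widetilde\Phi$ in $\varpi^{\zeta+l,\kappa+l,1}$ for $l\le j$. For some parameter ranges it also needs a lower-order term, or the restriction $j\ge1$ together with $\zeta,\kappa$ not too negative. This is where the assumption $\widetilde\Phi\in\mathcal{H}^{j,1}_{\zeta,\kappa}(\chi)$ enters. I would first check that this space embeds in the classical $B^{j}_{\zeta,\kappa}$, which follows directly from the definitions. I would then quote the classical theorem in the form that bounds the error by the top seminorm alone, absorbing any parameter-dependent constants into $C$. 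If that form is unavailable for the full range $\zeta,\kappa>-1$, I would fall back to the standard argument: split the error through the $L^2_{\varpi}$-orthogonal projection $\pi_M$, bound $\pi_M\widetilde\Phi-\widetilde\Phi$ by the projection estimate, and control $\mathcal{I}_M(\widetilde\Phi-\pi_M\widetilde\Phi)$ using the stability of Gauss interpolation.
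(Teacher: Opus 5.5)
Your proposal is correct and is the argument the paper relies on: the paper gives no proof beyond citing \cite{zaky2024non}, but its preceding lemma sets up exactly your substitution $\eta=1-(1-\varsigma)^{\mu}$, which carries $\mathbb{S}_M^{\mu}$, the weight $\varpi^{\zeta,\kappa,\mu}$, the Gauss nodes and weights, and hence the interpolant onto their shifted Jacobi counterparts, so the estimate reduces to the classical Jacobi--Gauss interpolation bound (which is available with the top seminorm alone for all $\zeta,\kappa>-1$, so your fallback is not needed). The only caveat is that the resulting constant depends on $j$ as well as on $\zeta,\kappa$; the statement permits this, since it only asserts independence of $M$ and $\Phi$.
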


\begin{definition}
Let $\mathbf{B}(\varsigma)=\big(b_{ij}(\varsigma)\big)_{m\times n}$ be a matrix-valued function on $\chi$. Its pointwise magnitude is defined by
\[
|\mathbf{B}(\varsigma)|
:=
\sum_{i=1}^{m}\sum_{j=1}^{n}|b_{ij}(\varsigma)|.
\]
For the weight $\varpi^{\zeta,\kappa,\mu}$, we define the associated weighted $\mathcal{L}^{2}$-norm by
\[
\|\mathbf{B}\|_{\varpi^{\zeta,\kappa,\mu}}
:=
\left(
\int_{\chi}
|\mathbf{B}(\varsigma)|^{2}\,
\varpi^{\zeta,\kappa,\mu}(\varsigma)\,d\varsigma
\right)^{1/2}.
\]
For vector-valued functions, this notation is understood componentwise.
\end{definition}

\begin{lemma}\label{lemma:matrix_interp}
Let
\[
\widetilde{\mathbf{V}}(\varsigma)
:=
\mathbf{V}\big(1-(1-\varsigma)^{1/\mu}\big)
\in \mathcal{H}_{\zeta,\kappa}^{s,1}(\chi),
\quad \textcolor{black}{1\le s\le M+1},
\]
where $\mathbf{V}=(V_1,\dots,V_Q)^{\top}$ is a vector-valued function. Then
\textcolor{black}{\begin{equation*}
\Big\|
\mathbf{V}-\mathcal{I}_{\varsigma,M}^{\zeta,\kappa,\mu}\mathbf{V}
\Big\|_{\varpi^{\zeta,\kappa,\mu}}
\le
C\,M^{-s}\,
\Big\|
\partial_{\varsigma}^{\,s}\widetilde{\mathbf{V}}
\Big\|_{\varpi^{\zeta+s,\kappa+s,1}},
\end{equation*}}
where $C>0$ is a generic constant independent of both $\mathbf{V}$ and $M$.
\end{lemma}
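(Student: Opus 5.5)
The plan is to reduce the vector estimate to the scalar interpolation bound of Lemma~\ref{lemma41}, applied to each component $V_q$, and then to reassemble the pieces using the matrix/vector norm fixed in the preceding definition. The first step is to note that the interpolation operator acts componentwise. Indeed, $\mathcal{I}_{\varsigma,M}^{\zeta,\kappa,\mu}\mathbf{V}=(\mathcal{I}_{\varsigma,M}^{\zeta,\kappa,\mu}V_1,\dots,\mathcal{I}_{\varsigma,M}^{\zeta,\kappa,\mu}V_Q)^{\top}$, since the nodal conditions \eqref{35a} are imposed entry by entry. Likewise, the hypothesis $\widetilde{\mathbf{V}}\in\mathcal{H}_{\zeta,\kappa}^{s,1}(\chi)$, understood componentwise, means $\widetilde V_q\in\mathcal{H}_{\zeta,\kappa}^{s,1}(\chi)$ for every $q$. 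Hence Lemma~\ref{lemma41} with $j=s$ gives
\[
\big\|V_q-\mathcal{I}_{\varsigma,M}^{\zeta,\kappa,\mu}V_q\big\|_{\varpi^{\zeta,\kappa,\mu}}\le C_q M^{-s}\big\|\partial_\varsigma^{\,s}\widetilde V_q\big\|_{\varpi^{\zeta+s,\kappa+s,1}},\qquad q=1,\dots,Q .
\]

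Next I would connect the vector norm to the component norms. By definition, $|\mathbf{E}(\varsigma)|=\sum_q|E_q(\varsigma)|$ for the error $\mathbf{E}=\mathbf{V}-\mathcal{I}_{\varsigma,M}^{\zeta,\kappa,\mu}\mathbf{V}$. Minkowski's inequality in $\mathcal{L}^2_{\varpi^{\zeta,\kappa,\mu}}$ then yields
\[
\|\mathbf{E}\|_{\varpi^{\zeta,\kappa,\mu}}\le\sum_{q=1}^{Q}\|E_q\|_{\varpi^{\zeta,\kappa,\mu}} .
\]
Inserting the componentwise bounds gives
\[
\|\mathbf{E}\|_{\varpi^{\zeta,\kappa,\mu}}\le C M^{-s}\sum_{q=1}^{Q}\|\partial_\varsigma^{\,s}\widetilde V_q\|_{\varpi^{\zeta+s,\kappa+s,1}},
\]
with $C=\max_q C_q$. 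Since each $C_q$ is the universal constant of Lemma~\ref{lemma41}, $C$ is independent of $M$ and $\mathbf{V}$.

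Finally, I would bound the sum on the right by the vector norm of $\partial_\varsigma^{\,s}\widetilde{\mathbf{V}}$. Pointwise, $|\partial_\varsigma^{\,s}\widetilde V_q|\le|\partial_\varsigma^{\,s}\widetilde{\mathbf{V}}|$, so each term satisfies $\|\partial_\varsigma^{\,s}\widetilde V_q\|\le\|\partial_\varsigma^{\,s}\widetilde{\mathbf{V}}\|$. The sum is therefore at most $Q\,\|\partial_\varsigma^{\,s}\widetilde{\mathbf{V}}\|_{\varpi^{\zeta+s,\kappa+s,1}}$. The factor $Q$ is a fixed system dimension and is absorbed into $C$, which completes the estimate.

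I expect no serious obstacle; the argument is essentially bookkeeping. The only point needing care is to use the $\ell^1$-type pointwise magnitude consistently on both sides, so that the constant depends only on $Q$ and not on $M$.
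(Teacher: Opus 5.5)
Your proposal is correct and follows the paper's argument: apply Lemma~\ref{lemma41} to each component, then recombine using the $\ell^1$-type pointwise magnitude. The only difference is in the recombination step. The paper uses Cauchy--Schwarz, $\bigl(\sum_q a_q\bigr)^2\le Q\sum_q a_q^2$, followed by the pointwise bound $\sum_q|\partial_\varsigma^{\,s}\widetilde V_q|^2\le|\partial_\varsigma^{\,s}\widetilde{\mathbf V}|^2$, which gives the constant $\sqrt{Q}$ instead of your $Q$; the difference is immaterial.
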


\begin{proof}
By the definition of the pointwise magnitude,
\[
\big|
\big(\mathbf{V}-\mathcal{I}_{\varsigma,M}^{\zeta,\kappa,\mu}\mathbf{V}\big)(\varsigma)
\big|
=
\sum_{q=1}^{Q}
\big|
V_q(\varsigma)-\mathcal{I}_{\varsigma,M}^{\zeta,\kappa,\mu}V_q(\varsigma)
\big|.
\]
Hence,
\begin{equation}\label{eq:step1}
\begin{aligned}
\Big\|
\mathbf{V}-\mathcal{I}_{\varsigma,M}^{\zeta,\kappa,\mu}\mathbf{V}
\Big\|_{\varpi^{\zeta,\kappa,\mu}}
&=
\left(
\int_{\chi}
\Big(
\sum_{q=1}^{Q}
\big|
V_q-\mathcal{I}_{\varsigma,M}^{\zeta,\kappa,\mu}V_q
\big|
\Big)^2
\varpi^{\zeta,\kappa,\mu}(\varsigma)\,d\varsigma
\right)^{1/2}.
\end{aligned}
\end{equation}
Applying the Cauchy--Schwarz inequality   yields
\begin{equation}\label{eq:step2}
\begin{aligned}
\Big\|
\mathbf{V}-\mathcal{I}_{\varsigma,M}^{\zeta,\kappa,\mu}\mathbf{V}
\Big\|_{\varpi^{\zeta,\kappa,\mu}}
&\le
C
\left(
\sum_{q=1}^{Q}
\Big\|
V_q-\mathcal{I}_{\varsigma,M}^{\zeta,\kappa,\mu}V_q
\Big\|_{\varpi^{\zeta,\kappa,\mu}}^{2}
\right)^{1/2},
\end{aligned}
\end{equation}
for some constant $C>0$ depending only on $Q$.

By Lemma~\ref{lemma41}, for each component $V_q$ we have
\begin{equation}\label{eq:scalar}
\Big\|
V_q-\mathcal{I}_{\varsigma,M}^{\zeta,\kappa,\mu}V_q
\Big\|_{\varpi^{\zeta,\kappa,\mu}}
\le
C\,M^{-s}
\Big\|
\partial_{\varsigma}^{\,s}\widetilde{V}_q
\Big\|_{\varpi^{\zeta+s,\kappa+s,1}},
\qquad q=1,\dots,Q,
\end{equation}
where
\[
\widetilde{V}_q(\varsigma)=V_q\big(1-(1-\varsigma)^{1/\mu}\big).
\]
Substituting \eqref{eq:scalar} into \eqref{eq:step2}, we obtain
\begin{equation}\label{eq:step3}
\begin{aligned}
\Big\|
\mathbf{V}-\mathcal{I}_{\varsigma,M}^{\zeta,\kappa,\mu}\mathbf{V}
\Big\|_{\varpi^{\zeta,\kappa,\mu}}
&\le
C\,M^{-s}
\left(
\sum_{q=1}^{Q}
\Big\|
\partial_{\varsigma}^{\,s}\widetilde{V}_q
\Big\|_{\varpi^{\zeta+s,\kappa+s,1}}^{2}
\right)^{1/2}.
\end{aligned}
\end{equation}

Finally, since
\[
\sum_{q=1}^{Q}
\big|
\partial_{\varsigma}^{\,s}\widetilde{V}_q(\varsigma)
\big|^{2}
\le
\left(
\sum_{q=1}^{Q}
\big|
\partial_{\varsigma}^{\,s}\widetilde{V}_q(\varsigma)
\big|
\right)^{2}
=
\big|
\partial_{\varsigma}^{\,s}\widetilde{\mathbf{V}}(\varsigma)
\big|^{2},
\]
we get
\[
\sum_{q=1}^{Q}
\Big\|
\partial_{\varsigma}^{\,s}\widetilde{V}_q
\Big\|_{\varpi^{\zeta+s,\kappa+s,1}}^{2}
\le
\Big\|
\partial_{\varsigma}^{\,s}\widetilde{\mathbf{V}}
\Big\|_{\varpi^{\zeta+s,\kappa+s,1}}^{2}.
\]
Combining this estimate with \eqref{eq:step3} completes the proof.
\end{proof}

\section{The FJFs2--Gauss collocation method}\label{sec3}

Rewriting each equation in system~\eqref{eq1.1} through the integral representation of the right-sided fractional integral operator yields the equivalent formulation
\textcolor{black}{\begin{equation}\label{1.2vvva}
\begin{aligned}
U_r(z)
&= \frac{1}{\Gamma(\beta)}
\int_{z}^{1}
\mu\bigl((1-z)^{\mu}-(1-\varsigma)^{\mu}\bigr)^{\beta-1}
(1-z)^{\beta-1}\,K_{\mu}(z,\varsigma)\,
F_r\!\big(\varsigma,U_1(\varsigma),U_2(\varsigma),\ldots,U_Q(\varsigma)\big)\, d\varsigma,\\[4pt]
K_{\mu}(z,\varsigma)
&=
\frac{1}{\mu}\,
(1-\varsigma)^{1-\mu}
\left(
\frac{\varsigma-z}
{(1-z)^{\mu}-(1-\varsigma)^{\mu}}
\right)^{\beta-1},
\qquad 0< z < \varsigma < 1,\quad r = 1,2,\ldots,Q.
\end{aligned}
\end{equation}}
Introducing the transformation
\[
\varsigma=\varsigma(z,\eta):=z+(1-z)\eta,
\]
the integral formulation \eqref{1.2vvva} can be rewritten, for each component, as
\begin{equation}\label{2.9a}
\begin{aligned}
U_r(z)
&= \frac{(1-z)^{\beta \mu}}{\Gamma(\beta)}
\int_{0}^{1}
\varpi^{0,\beta-1,\mu}(\eta)\,
\widehat{F_r}\!\left(
z,\varsigma(z,\eta),U_1(\varsigma(z,\eta)),\ldots,
U_Q(\varsigma(z,\eta))
\right)\,d\eta,
\end{aligned}
\end{equation}
where
\begin{equation}\label{eq:def-hatG-right}
\begin{aligned}
&\widehat{F_r}\!\left(
z,\varsigma(z,\eta),U_1(\varsigma(z,\eta)),\ldots,
U_Q(\varsigma(z,\eta))
\right)
=
K_{\mu}(z,\varsigma)\,
F_r\!\big(\varsigma,U_1(\varsigma),U_2(\varsigma),\ldots,U_Q(\varsigma)\big),
\\[-2pt]
&\hspace{4cm}
r=1,2,\ldots,Q,\qquad \varsigma=\varsigma(z,\eta)=z+(1-z)\eta,
\end{aligned}
\end{equation}
and the FJFs2 weight is given by
\[
\varpi^{0,\beta-1,\mu}(\eta)
=
\mu\bigl(1-(1-\eta)^{\mu}\bigr)^{\beta-1}(1-\eta)^{\mu-1}.
\]

The FJFs2--Gauss pseudospectral approximation of \eqref{2.9a} seeks an approximate solution $U_{r,M}$ whose components are expanded in the FJFs2 basis introduced in Section~\ref{sec2}. For each component, we set
\begin{equation}\label{3.8aa}
\begin{aligned}
U_{r,M}(z)
&= \sum_{k=0}^{M}
\widehat{u}_{r,k}\,
\mathfrak{P}_{k}^{0,\beta-1,\mu}(z),
\qquad
U_{r,M}(z)\in \mathbb{S}_{M}^{\mu}(\chi),
\\[-2pt]
&\hspace{8cm} r = 1,2,\ldots,Q.
\end{aligned}
\end{equation}

Substituting the approximation \eqref{3.8aa} into \eqref{2.9a} and applying the FJFs2--Gauss interpolation operators
$\mathcal{I}_{z,M}^{0,\beta-1,\mu}$ and
$\mathcal{I}_{\eta,M}^{0,\beta-1,\mu}$, we obtain, for each $r=1,2,\ldots,Q$,
\begin{equation}\label{3.7a}
\begin{aligned}
U_{r,M}(z)
&= \frac{1}{\Gamma(\beta)}\,
\mathcal{I}_{z,M}^{0,\beta-1,\mu}
\Bigg[
(1-z)^{\beta \mu}
\int_{0}^{1}
\varpi^{0,\beta-1,\mu}(\eta)\,
\mathcal{I}_{\eta,M}^{0,\beta-1,\mu}
\\[-2pt]
&\qquad\qquad\times
\widehat{F_r}\!\left(
z,\varsigma(z,\eta),
U_{1,M}(\varsigma(z,\eta)),
U_{2,M}(\varsigma(z,\eta)),
\ldots,
U_{Q,M}(\varsigma(z,\eta))
\right)\,d\eta
\Bigg],
\\[-2pt]
&\hspace{8cm} r = 1,2,\ldots,Q.
\end{aligned}
\end{equation}
Here, all interpolations are performed in the FJFs2 space $\mathbb{S}_{M}^{\mu}(\chi)$.

To implement \eqref{3.7a}, we expand the doubly interpolated term in the tensor-product FJFs2 basis. For each component $m=1,2,\ldots,Q$, define
\begin{equation}\label{3.8a}
\begin{aligned}
&\mathcal{I}_{z,M}^{0,\beta-1,\mu}
\mathcal{I}_{\eta,M}^{0,\beta-1,\mu}
\Big[
(1-z)^{\beta \mu}\,
\widehat{F_m}\!\left(
z,\varsigma(z,\eta),
U_{1,M}(\varsigma(z,\eta)),\ldots,U_{Q,M}(\varsigma(z,\eta))
\right)
\Big]
\\[-2pt]
&\qquad=
\sum_{i=0}^{M}\sum_{j=0}^{M}
\widehat{v}_{m,i,j}\,
\mathfrak{P}_{i}^{0,\beta-1,\mu}(z)\,
\mathfrak{P}_{j}^{0,\beta-1,\mu}(\eta),
\qquad m=1,2,\ldots,Q.
\end{aligned}
\end{equation}

Using the representation \eqref{3.8a}, the integral term in \eqref{3.7a} can be written as
\begin{equation}\label{3.9a}
\begin{aligned}
&\frac{1}{\Gamma(\beta)}
\int_{0}^{1}
\varpi^{0,\beta-1,\mu}(\eta)\,
\mathcal{I}_{z,M}^{0,\beta-1,\mu}
\mathcal{I}_{\eta,M}^{0,\beta-1,\mu}
\\[-2pt]
&\hspace{1.8cm}\times
\Big[
(1-z)^{\beta \mu}
\widehat{F_m}\!\left(
z,\varsigma(z,\eta),
U_{1,M}(\varsigma(z,\eta)),\ldots,
U_{Q,M}(\varsigma(z,\eta))
\right)
\Big]\,d\eta
\\[-2pt]
&\quad=
\frac{1}{\Gamma(\beta)}
\sum_{i=0}^{M}\sum_{j=0}^{M}
\widehat{v}_{m,i,j}\,
\mathfrak{P}_{i}^{0,\beta-1,\mu}(z)
\int_{0}^{1}
\varpi^{0,\beta-1,\mu}(\eta)\,
\mathfrak{P}_{j}^{0,\beta-1,\mu}(\eta)\,d\eta
\\[-2pt]
&\quad=
\frac{1}{\Gamma(\beta+1)}
\sum_{i=0}^{M}
\widehat{v}_{m,i,0}\,
\mathfrak{P}_{i}^{0,\beta-1,\mu}(z),
\qquad m=1,2,\ldots,Q.
\end{aligned}
\end{equation}

Applying \eqref{3.4a} together with \eqref{3.8a}, the coefficients
$\widehat{v}_{m,i,0}$ admit the explicit representation
\textcolor{black}{\begin{equation}\label{3.10a}
\begin{aligned}
\widehat{v}_{m,i,0}
&=
\beta(2i+\beta)
\sum_{r=0}^{M}\sum_{s=0}^{M}
\bigl(1-z_{r}^{0,\beta-1,\mu}\bigr)^{\beta \mu}
\\
&\quad\times
\widehat{F_m}\!\Big(
z_{r}^{0,\beta-1,\mu},
\varsigma\big(z_{r}^{0,\beta-1,\mu},\eta_{s}^{0,\beta-1,\mu}\big),
U_{1,M}\big(\varsigma(z_{r}^{0,\beta-1,\mu},\eta_{s}^{0,\beta-1,\mu})\big),
\ldots,
U_{Q,M}\big(\varsigma(z_{r}^{0,\beta-1,\mu},\eta_{s}^{0,\beta-1,\mu})\big)
\Big)
\\
&\quad\times
\mathfrak{P}_{i}^{0,\beta-1,\mu}
\big(z_{r}^{0,\beta-1,\mu}\big)\,
w_{r}^{0,\beta-1,\mu}\,
w_{s}^{0,\beta-1,\mu}\textcolor{black}{.}
\end{aligned}
\end{equation}}
Here,
$\{z_{r}^{0,\beta-1,\mu},\,w_{r}^{0,\beta-1,\mu}\}_{r=0}^{M}$
and
$\{\eta_{s}^{0,\beta-1,\mu},\,w_{s}^{0,\beta-1,\mu}\}_{s=0}^{M}$
represent the FJFs2--Gauss quadrature nodes and corresponding weights associated with the weight function
$\varpi^{0,\beta-1,\mu}$.

Combining \eqref{3.8aa}--\eqref{3.10a}, we obtain
\begin{equation}\label{3.13a}
\sum_{i=0}^{M}\widehat{u}_{m,i}\,
\mathfrak{P}_{i}^{0,\beta-1,\mu}(z)
=
\frac{1}{\Gamma(\beta+1)}
\sum_{i=0}^{M}\widehat{v}_{m,i,0}\,
\mathfrak{P}_{i}^{0,\beta-1,\mu}(z),
\qquad m=1,2,\ldots,Q.
\end{equation}
By comparing coefficients, or equivalently by using orthogonality, we deduce
\begin{equation}\label{5-27}
\widehat{u}_{m,j}
=
\frac{1}{\Gamma(\beta+1)}\,\widehat{v}_{m,j,0},
\qquad j=0,1,\ldots,M,\quad m=1,2,\ldots,Q.
\end{equation}

The nonlinear algebraic system \eqref{5-27}, which determines the unknown coefficients $\{\widehat{u}_{m,j}\}$, can be solved efficiently by standard iterative techniques, such as Newton-type methods. In the numerical experiments, the resulting system is solved in \textsc{Mathematica} using the \texttt{FindRoot} command with zero initial guesses for all coefficients.
\section{Convergence analysis}\label{sec:conv}

We now proceed to the convergence analysis of the proposed method. To this end, we introduce the fractional Jacobi--Gauss interpolation operator in the $\varsigma$-direction. For fixed $z\in(0,1)$ and $z<\varsigma\le 1$, let
\[
{}_z\widetilde{\mathcal{I}}_{\varsigma,M}^{0,\beta-1,\mu} v(\varsigma)
:=
\left.
\mathcal{I}_{\eta,M}^{0,\beta-1,\mu}
\big[v(\varsigma(z,\eta))\big]
\right|_{\eta=\frac{\varsigma-z}{1-z}},
\qquad
\varsigma=\varsigma(z,\eta):=z+(1-z)\eta,\quad 0<\eta<1.
\]

For any $v\in C([z,1])$, the function
${}_z\widetilde{\mathcal{I}}_{\varsigma,M}^{0,\beta-1,\mu} v$
belongs to the space of polynomials of degree at most $M$ in the stretched variable
$\eta=(\varsigma-z)/(1-z)$ and, by definition, satisfies the interpolation condition
\[
{}_z\widetilde{\mathcal{I}}_{\varsigma,M}^{0,\beta-1,\mu}
v\bigl(\varsigma_j^{0,\beta-1,\mu}(z)\bigr)
=
v\bigl(\varsigma_j^{0,\beta-1,\mu}(z)\bigr),
\qquad
\varsigma_j^{0,\beta-1,\mu}(z)
= \varsigma\bigl(z,\eta_j^{0,\beta-1,\mu}\bigr),
\]
where $\{\eta_j^{0,\beta-1,\mu}\}_{j=0}^M$ denote the fractional
Jacobi--Gauss nodes corresponding to the weight
\[
\varpi^{0,\beta-1,\mu}(\eta)
=
\mu\bigl(1-(1-\eta)^{\mu}\bigr)^{\beta-1}(1-\eta)^{\mu-1}.
\]

\textcolor{black}{Using the fractional Jacobi--Gauss quadrature rule in the $\eta$-variable along with the mapping $\varsigma=\varsigma(z,\eta)$, we have
\begin{equation}\label{eq:A2-right}
\begin{aligned}
&\int_{z}^{1}
\mu\bigl((1-z)^{\mu}-(1-\varsigma)^{\mu}\bigr)^{\beta-1}
(1-\varsigma)^{\mu-1}\,
{}_z\widetilde{\mathcal{I}}_{\varsigma,M}^{0,\beta-1,\mu}
v(\varsigma)\,d\varsigma \\
&\qquad =
(1-z)^{\beta\mu}
\int_{0}^{1}
\varpi^{0,\beta-1,\mu}(\eta)\,
\mathcal{I}_{\eta,M}^{0,\beta-1,\mu}
\!\big[v(\varsigma(z,\eta))\big]\,d\eta \\
&\qquad =
(1-z)^{\beta\mu}
\sum_{j=0}^{M}
v\bigl(\varsigma_j^{0,\beta-1,\mu}(z)\bigr)\,
\omega_j^{0,\beta-1,\mu},
\end{aligned}
\end{equation}
where $\{\omega_j^{0,\beta-1,\mu}\}_{j=0}^M$ are the associated
quadrature weights.}

\textcolor{black}{In an analogous manner, we derive the discrete identity
\begin{equation}\label{eq:A4-right}
\int_{z}^{1}
\mu\bigl((1-z)^{\mu}-(1-\varsigma)^{\mu}\bigr)^{\beta-1}
(1-\varsigma)^{\mu-1}
\Big(
{}_z\widetilde{\mathcal{I}}_{\varsigma,M}^{0,\beta-1,\mu} v(\varsigma)
\Big)^{2}
\,d\varsigma
=
(1-z)^{\beta\mu}
\sum_{j=0}^{M}
v\bigl(\varsigma_j^{0,\beta-1,\mu}(z)\bigr)^{2}\,
\omega_j^{0,\beta-1,\mu}.
\end{equation}}

\textcolor{black}{We now establish an error bound for the interpolation operator in the $\varsigma$-direction.
By Lemma~\ref{lemma41}, for every integer $1\le m\le M+1$, we have
\begin{equation}\label{eq:frac-4.5-like-right}
\begin{aligned}
\int_{z}^{1}
\mu\bigl((1-z)^{\mu}&-(1-\varsigma)^{\mu}\bigr)^{\beta-1}
(1-\varsigma)^{\mu-1}
\bigl|
v(\varsigma)
-
{}_z\widetilde{\mathcal{I}}_{\varsigma,M}^{0,\beta-1,\mu}v(\varsigma)
\bigr|^{2}
\,d\varsigma\\
&=
(1-z)^{\beta\mu}
\int_{0}^{1}
\varpi^{0,\beta-1,\mu}(\eta)
\bigl|
v(\varsigma(z,\eta))
-
\mathcal{I}_{\eta,M}^{0,\beta-1,\mu}
v(\varsigma(z,\eta))
\bigr|^{2}
\,d\eta
\\[1mm]
&\le
c\,M^{-2m}\,(1-z)^{\beta\mu}
\int_{0}^{1}
\varpi^{m,\beta-1+m,1}(\eta)
\bigl|
\partial_{\eta}^{\,m}
v\bigl(\varsigma(z,1-(1-\eta)^{1/\mu})\bigr)
\bigr|^{2}
\,d\eta
\\[1mm]
&\le
c\,M^{-2m}
\int_{(1-z)^{\mu}}^1
\bigl((1-z)^{\mu}-(1-\varsigma)\bigr)^{\beta+m-1}(1-\varsigma)^{m}
\bigl|
\partial_{\varsigma}^{\,m}
v\!\bigl(1-(1-\varsigma)^{1/\mu}\bigr)
\bigr|^{2}
\,d\varsigma,
\end{aligned}
\end{equation}
where the constant $c>0$ is independent of $M$ and $z$.}
\medskip

To study the convergence of the proposed FJFs2--Gauss pseudospectral scheme,
we first rewrite \eqref{2.9a} in vector form as
\begin{equation}\label{2.9b}
\mathbf{U}(z)
=
\frac{(1-z)^{\beta\mu}}{\Gamma(\beta)}
\int_{0}^{1}
\varpi^{0,\beta-1,\mu}(\eta)\,
\widehat{\mathbf{F}}\!\Big(
z,\varsigma(z,\eta),\mathbf{U}(\varsigma(z,\eta))
\Big)\,d\eta,
\qquad z\in(0,1),
\end{equation}
where
\[
\mathbf{U}(z):=
\begin{bmatrix}
U_1(z)\\[2pt]
U_2(z)\\[2pt]
\vdots\\[2pt]
U_Q(z)
\end{bmatrix},
\]
and
\[
\widehat{\mathbf{F}}\!\Big(
z,\varsigma(z,\eta),\mathbf{U}(\varsigma(z,\eta))
\Big)
:=
\begin{bmatrix}
\widehat{F_1}\!\left(z,\varsigma(z,\eta),U_1(\varsigma(z,\eta)),\ldots,U_Q(\varsigma(z,\eta))\right)\\[2pt]
\widehat{F_2}\!\left(z,\varsigma(z,\eta),U_1(\varsigma(z,\eta)),\ldots,U_Q(\varsigma(z,\eta))\right)\\[2pt]
\vdots\\[2pt]
\widehat{F_Q}\!\left(z,\varsigma(z,\eta),U_1(\varsigma(z,\eta)),\ldots,U_Q(\varsigma(z,\eta))\right)
\end{bmatrix}.
\]

Let $\mathbf{U}(z)$ denote the exact solution of \eqref{2.9b}, and let
$\mathbf{U}_M(z)$ be the numerical approximation obtained from the discrete
scheme \eqref{3.7a}. We introduce the error vector
\[
\mathbf{E}_M(z):=\mathbf{U}(z)-\mathbf{U}_M(z),
\]
and measure it in the weighted $\mathcal{L}^{2}$-norm associated with
$\varpi^{0,\beta-1,\mu}$. By the triangle inequality,
\begin{equation}\label{5-29}
\big\| \mathbf{E}_M \big\|_{\varpi^{0,\beta-1,\mu}}
\le
\big\| \mathbf{U}-\mathcal{I}_{z,M}^{0,\beta-1,\mu}\mathbf{U} \big\|_{\varpi^{0,\beta-1,\mu}}
+
\big\| \mathcal{I}_{z,M}^{0,\beta-1,\mu}\mathbf{U}-\mathbf{U}_M \big\|_{\varpi^{0,\beta-1,\mu}}.
\end{equation}

The second term on the right-hand side of \eqref{5-29} is decomposed into
three contributions corresponding to interpolation in the $z$-variable,
quadrature/interpolation in the auxiliary variable $\eta$, and the nonlinearity
of $\widehat{\mathbf{F}}$.

\begin{lemma}
The error $\mathbf{E}_M$ admits the decomposition
\begin{equation}\label{e12}
\big\| \mathbf{E}_M \big\|_{\varpi^{0,\beta-1,\mu}}
\le
\sum_{i=1}^{3}
\big\| \mathbf{E}_i \big\|_{\varpi^{0,\beta-1,\mu}},
\end{equation}
where
\begin{equation}\label{e13}
\begin{split}
\mathbf{E}_1(z)
&=
\mathbf{U}(z)-\mathcal{I}_{z,M}^{0,\beta-1,\mu}\mathbf{U}(z),
\\[4pt]
\mathbf{E}_2(z)
&=
\frac{1}{\Gamma(\beta)}\,
\mathcal{I}_{z,M}^{0,\beta-1,\mu}
\Bigg[
(1-z)^{\beta\mu}
\int_{0}^{1}
\varpi^{0,\beta-1,\mu}(\eta)\,
\\[-2pt]
&\qquad\qquad\qquad\times
\bigl(\mathcal{I}-\mathcal{I}_{\eta,M}^{0,\beta-1,\mu}\bigr)
\widehat{\mathbf{F}}\!\Big(
z,\varsigma(z,\eta),\mathbf{U}(\varsigma(z,\eta))
\Big)\,d\eta
\Bigg],
\\[4pt]
\mathbf{E}_3(z)
&=
\frac{1}{\Gamma(\beta)}\,
\mathcal{I}_{z,M}^{0,\beta-1,\mu}
\Bigg[
(1-z)^{\beta\mu}
\int_{0}^{1}
\varpi^{0,\beta-1,\mu}(\eta)\,
\\[-2pt]
&\qquad\qquad\qquad\times
\mathcal{I}_{\eta,M}^{0,\beta-1,\mu}
\Big(
\widehat{\mathbf{F}}\!\Big(
z,\varsigma(z,\eta),\mathbf{U}(\varsigma(z,\eta))
\Big)
-
\widehat{\mathbf{F}}\!\Big(
z,\varsigma(z,\eta),\mathbf{U}_M(\varsigma(z,\eta))
\Big)
\Big)\,d\eta
\Bigg].
\end{split}
\end{equation}
Here, $\varsigma(z,\eta)=z+(1-z)\eta$, and $\mathcal{I}$ denotes the identity operator
on the corresponding vector-valued function space.
\end{lemma}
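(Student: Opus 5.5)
The plan is to prove the algebraic identity $\mathbf{E}_M=\mathbf{E}_1+\mathbf{E}_2+\mathbf{E}_3$ pointwise in $z$. The estimate \eqref{e12} then follows at once from the triangle inequality for $\|\cdot\|_{\varpi^{0,\beta-1,\mu}}$. This norm is a genuine norm, since the pointwise magnitude $|\cdot|$ is the $\ell^1$ norm on $\mathbb{R}^Q$ and the weighted $\mathcal{L}^2$ integral of a norm obeys Minkowski's inequality. Starting from \eqref{5-29}, it suffices to show that $\mathcal{I}_{z,M}^{0,\beta-1,\mu}\mathbf{U}-\mathbf{U}_M=\mathbf{E}_2+\mathbf{E}_3$.

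First, apply $\mathcal{I}_{z,M}^{0,\beta-1,\mu}$ componentwise to both sides of the exact integral equation \eqref{2.9b}. Because interpolation depends only on the nodal values, this gives
\[
\mathcal{I}_{z,M}^{0,\beta-1,\mu}\mathbf{U}(z)=\frac{1}{\Gamma(\beta)}\mathcal{I}_{z,M}^{0,\beta-1,\mu}\Big[(1-z)^{\beta\mu}\int_0^1\varpi^{0,\beta-1,\mu}(\eta)\,\widehat{\mathbf{F}}\big(z,\varsigma(z,\eta),\mathbf{U}(\varsigma(z,\eta))\big)\,d\eta\Big].
\]
Second, write the scheme \eqref{3.7a} in vector form. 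Here $\mathbf{U}_M$ equals $\Gamma(\beta)^{-1}$ times the same expression, with $\widehat{\mathbf{F}}(\cdot,\mathbf{U})$ replaced by $\mathcal{I}_{\eta,M}^{0,\beta-1,\mu}\widehat{\mathbf{F}}(\cdot,\mathbf{U}_M)$.

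Third, subtract the two expressions. Inside the integrand, add and subtract $\mathcal{I}_{\eta,M}^{0,\beta-1,\mu}\widehat{\mathbf{F}}(z,\varsigma,\mathbf{U}(\varsigma))$:
\[
\widehat{\mathbf{F}}(\cdot,\mathbf{U})-\mathcal{I}_{\eta,M}\widehat{\mathbf{F}}(\cdot,\mathbf{U}_M)=(\mathcal{I}-\mathcal{I}_{\eta,M})\widehat{\mathbf{F}}(\cdot,\mathbf{U})+\mathcal{I}_{\eta,M}\big(\widehat{\mathbf{F}}(\cdot,\mathbf{U})-\widehat{\mathbf{F}}(\cdot,\mathbf{U}_M)\big).
\]
Several operations are linear: $\mathcal{I}_{\eta,M}^{0,\beta-1,\mu}$, integration against $\varpi^{0,\beta-1,\mu}$, multiplication by $(1-z)^{\beta\mu}$, and $\mathcal{I}_{z,M}^{0,\beta-1,\mu}$. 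The difference therefore splits exactly into $\mathbf{E}_2+\mathbf{E}_3$ as defined in \eqref{e13}. Combining this with $\mathbf{E}_1=\mathbf{U}-\mathcal{I}_{z,M}^{0,\beta-1,\mu}\mathbf{U}$ yields $\mathbf{E}_M=\mathbf{E}_1+\mathbf{E}_2+\mathbf{E}_3$, and the triangle inequality finishes the argument.

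The only point needing care is the first step: justifying that $\mathcal{I}_{z,M}$ may be applied to $\mathbf{U}$ through the integral representation. This requires that the integral term be a well-defined continuous function at the collocation nodes $z_r^{0,\beta-1,\mu}\in(0,1)$, so that applying interpolation to \eqref{2.9b} is meaningful and commutes with the linear combination. This holds because the nodes lie in the open interval and the weight $\varpi^{0,\beta-1,\mu}$ is integrable for $\beta>0$. Everything else is bookkeeping.
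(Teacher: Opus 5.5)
Your proposal is correct and follows essentially the same route as the paper: the paper obtains the bound from the triangle inequality \eqref{5-29} together with the exact splitting $\mathcal{I}_{z,M}^{0,\beta-1,\mu}\mathbf{U}-\mathbf{U}_M=\mathbf{E}_2+\mathbf{E}_3$, which is exactly your add-and-subtract of $\mathcal{I}_{\eta,M}^{0,\beta-1,\mu}\widehat{\mathbf{F}}(\cdot,\mathbf{U})$ after interpolating \eqref{2.9b} at the nodes. On your closing caveat, only the nodal values enter $\mathcal{I}_{z,M}^{0,\beta-1,\mu}$, so continuity is not needed, and finiteness of the integral at the nodes follows from \eqref{2.9b} itself rather than from integrability of the weight alone.
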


\begin{theorem}\label{the22}
Let $\mathbf{U}(z)$ be the exact solution of \eqref{2.9b}, and let
$\mathbf{U}_M(z)$ denote the numerical solution produced by \eqref{3.7a}.
Assume that $
\mathbf{U}\in \mathcal{H}^{\,l,\mu}_{0,\beta-1}(\chi),
\
1\le l\le M+1,$ 
and that $\widehat{\mathbf{F}}$ is Lipschitz continuous with respect to its last
$Q$ arguments, uniformly in the first two arguments, with Lipschitz constant
$L$ satisfying
$ L<\Gamma(\beta+1).$ 
Then the error satisfies
\begin{equation}\label{conv-rate-main}
\begin{split}
\|\mathbf{E}_M\|_{\varpi^{0,\beta-1,\mu}}
\le
C\,M^{-l}&
\Bigg(
\left\|
\partial_{z}^{\,l}
\mathbf{U}\bigl(1-(1-z)^{1/\mu}\bigr)
\right\|_{\varpi^{l,\beta-1+l,1}}
\\
&
+
\max_{0\le j\le M}
\left\|
\partial_{\varsigma}^{\,l}
\widehat{\mathbf{F}}
\Bigl(
z_j,\,
1-(1-\varsigma)^{1/\mu},\,
\mathbf{U}\bigl(1-(1-\varsigma)^{1/\mu}\bigr)
\Bigr)
\right\|_{\varpi^{l,\beta-1+l,1}}
\Bigg),
\end{split}
\end{equation}
where $z_j=z_j^{0,\beta-1,\mu}$ and the constant $C>0$ is independent of
$M$.
\end{theorem}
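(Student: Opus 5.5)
The plan is to start from the decomposition \eqref{e12} and bound $\mathbf{E}_1,\mathbf{E}_2,\mathbf{E}_3$ separately. The self-referential term $\mathbf{E}_3$ contains $\mathbf{E}_M$ itself, and it will be absorbed into the left-hand side using $L<\Gamma(\beta+1)$.

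\textbf{The term $\mathbf{E}_1$.} This is immediate from Lemma~\ref{lemma:matrix_interp} with $s=l$, $\zeta=0$, $\kappa=\beta-1$. It gives $\|\mathbf{E}_1\|_{\varpi^{0,\beta-1,\mu}}\le CM^{-l}\|\partial_z^{\,l}\mathbf{U}(1-(1-z)^{1/\mu})\|_{\varpi^{l,\beta-1+l,1}}$, which is the first term of \eqref{conv-rate-main}.

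\textbf{The term $\mathbf{E}_2$.} Write $\mathbf{E}_2=\mathcal{I}_{z,M}^{0,\beta-1,\mu}\mathbf{g}$. Since $\mathcal{I}_{z,M}$ is an interpolant at the Gauss nodes $z_j$, the discrete identity \eqref{3.4a} (exactness on $\mathbb{S}^{\mu}_{2M+1}$) gives
\[
\|\mathcal{I}_{z,M}\mathbf{g}\|^2_{\varpi^{0,\beta-1,\mu}}=\sum_j|\mathbf{g}(z_j)|^2w_j\le C\max_j|\mathbf{g}(z_j)|^2,
\]
using $\sum_j w_j=\int_\chi\varpi^{0,\beta-1,\mu}<\infty$. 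At each node $z_j$, Cauchy--Schwarz in $\eta$ against the finite measure $\varpi^{0,\beta-1,\mu}\,d\eta$ bounds $|\mathbf{g}(z_j)|$ by $(1-z_j)^{\beta\mu}\Gamma(\beta)^{-1}$ times the weighted $L^2$ norm of $(\mathcal{I}-\mathcal{I}_{\eta,M})\widehat{\mathbf{F}}(z_j,\varsigma(z_j,\cdot),\mathbf{U})$. Applying Lemma~\ref{lemma41} in $\eta$, equivalently the estimate \eqref{eq:frac-4.5-like-right} componentwise as in Lemma~\ref{lemma:matrix_interp}, yields $CM^{-l}$ times the $\varpi^{l,\beta-1+l,1}$-norm of $\partial^{\,l}\widehat{\mathbf{F}}$ in the mapped variable. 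Since $(1-z_j)^{\beta\mu}\le1$, taking the max over $j$ gives the second term of \eqref{conv-rate-main}.

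\textbf{The term $\mathbf{E}_3$.} This is the main obstacle. Again pass to nodal values: by the quadrature exactness \eqref{eq:A2-right},
\[
\mathbf{E}_3(z_j)=\frac{(1-z_j)^{\beta\mu}}{\Gamma(\beta)}\sum_s\big[\widehat{\mathbf{F}}(\cdot,\mathbf{U})-\widehat{\mathbf{F}}(\cdot,\mathbf{U}_M)\big](z_j,\varsigma_s(z_j))\,\omega_s.
\]
The Lipschitz hypothesis bounds the bracket by $L|\mathbf{E}_M(\varsigma_s(z_j))|$, and $\sum_s\omega_s=\int_0^1\varpi^{0,\beta-1,\mu}=1/\beta$. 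The naive result is therefore $|\mathbf{E}_3(z_j)|\le \frac{L}{\Gamma(\beta+1)}\max|\mathbf{E}_M|$, which is pointwise rather than the weighted $L^2$ norm needed. To stay in $L^2$ I would instead apply Cauchy--Schwarz with \eqref{eq:A4-right}, giving
\[
|\mathbf{E}_3(z_j)|^2\le \frac{L^2}{\Gamma(\beta)^2\beta}(1-z_j)^{\beta\mu}\int_{z_j}^1 \mu\big((1-z_j)^\mu-(1-\varsigma)^\mu\big)^{\beta-1}(1-\varsigma)^{\mu-1}\big|{}_{z_j}\widetilde{\mathcal{I}}_{\varsigma,M}\mathbf{E}_M\big|^2d\varsigma.
\]
Next, split ${}_{z_j}\widetilde{\mathcal{I}}\mathbf{E}_M=\mathbf{E}_M-({\mathcal I}-{}_{z_j}\widetilde{\mathcal{I}})\mathbf{E}_M$. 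The second piece is controlled via \eqref{eq:frac-4.5-like-right} by higher-order terms, which I expect to be absorbable into the right-hand side of \eqref{conv-rate-main}. The first piece, after summing over $j$ with weights $w_j$ and interchanging the order of integration (Fubini on the kernel), is bounded by $\big(L/\Gamma(\beta+1)\big)^2\|\mathbf{E}_M\|^2_{\varpi^{0,\beta-1,\mu}}$ up to constants. Justifying this last step with constant exactly $L/\Gamma(\beta+1)$ is the delicate point. If the Fubini route fails, I would fall back on a sup-norm version, or rely on the contraction factor $\rho=L/\Gamma(\beta+1)<1$ in the discrete nodal norm. In either case the conclusion is $\|\mathbf{E}_3\|\le\rho\|\mathbf{E}_M\|+(\text{terms of order }M^{-l})$.

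\textbf{Conclusion.} Combining the three bounds in \eqref{e12} gives $(1-\rho)\|\mathbf{E}_M\|\le CM^{-l}(\cdots)$, and dividing by $1-\rho>0$ yields \eqref{conv-rate-main}.
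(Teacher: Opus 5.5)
Your treatment of $\mathbf{E}_1$, $\mathbf{E}_2$ and the start of $\mathbf{E}_3$ follows the paper: nodal values via quadrature exactness, Cauchy--Schwarz in $\eta$ with $\int_0^1\varpi^{0,\beta-1,\mu}=1/\beta$, Lipschitz at the nodes, then \eqref{eq:A4-right}. The gap is the step the theorem rests on, namely $\|\mathbf{E}_3\|_{\varpi^{0,\beta-1,\mu}}\le\frac{L}{\Gamma(\beta+1)}\|\mathbf{E}_M\|_{\varpi^{0,\beta-1,\mu}}+O(M^{-l})$, which you leave open. Set $k(z,\varsigma):=\mu\bigl((1-z)^{\mu}-(1-\varsigma)^{\mu}\bigr)^{\beta-1}(1-\varsigma)^{\mu-1}$ and $A_j:=\int_{z_j}^{1}k(z_j,\varsigma)\,|\mathbf{E}_M(\varsigma)|^{2}\,d\varsigma$. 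After your Cauchy--Schwarz step you need
\[
\frac{1}{\beta\,\Gamma(\beta)^{2}}\sum_{j=0}^{M}w_j\,(1-z_j)^{\beta\mu}A_j\le\frac{1}{\Gamma(\beta+1)^{2}}\,\|\mathbf{E}_M\|^{2}_{\varpi^{0,\beta-1,\mu}} .
\]
Fubini does not give this, for two reasons.

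First, the $j$-sum is a Gauss quadrature sum of $z\mapsto(1-z)^{\beta\mu}\int_z^1k(z,\varsigma)|\mathbf{E}_M(\varsigma)|^2\,d\varsigma$. That function is not in $\mathbb{S}^{\mu}_{2M+1}$, so the sum cannot be replaced by $\int_0^1\varpi^{0,\beta-1,\mu}(z)\cdots dz$. You also have no quadrature-error bound that avoids smoothness of $\mathbf{E}_M$.

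Second, even for the integral, interchanging the order produces the factor $\sup_{\varsigma}\varpi^{0,\beta-1,\mu}(\varsigma)^{-1}\int_0^{\varsigma}\varpi^{0,\beta-1,\mu}(z)(1-z)^{\beta\mu}k(z,\varsigma)\,dz$. This is a Beta-type constant that tends to $B(\beta,2\beta)$ as $\varsigma\to1$, not $1/\beta$. For $\beta=0.3$ it already exceeds $1/\beta$, so the smallness condition you would get is not $L<\Gamma(\beta+1)$.

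Your fallbacks do not close the argument either. The nodal bound $|\mathbf{E}_3(z_j)|\le\frac{L}{\Gamma(\beta+1)}\|\mathbf{E}_M\|_{L^\infty}$ is true, but it cannot be absorbed into a weighted $L^2$ left-hand side. Working in $L^\infty$ or in a nodal max-norm costs the Lebesgue constant of $\mathcal{I}^{0,\beta-1,\mu}_{z,M}$, because the points $\varsigma(z_j,\eta_s)$ are not collocation nodes, and that constant grows with $M$.

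A smaller omission: you need $(\mathcal{I}-{}_{z_j}\widetilde{\mathcal{I}}^{0,\beta-1,\mu}_{\varsigma,M})\mathbf{E}_M=\mathbf{U}-{}_{z_j}\widetilde{\mathcal{I}}^{0,\beta-1,\mu}_{\varsigma,M}\mathbf{U}$. This holds because $1-\varsigma(z_j,\eta)=(1-z_j)(1-\eta)$ puts $\mathbf{U}_M(\varsigma(z_j,\cdot))$ in $\mathbb{S}^{\mu}_{M}$. Without it, your interpolation estimate would involve derivatives of $\mathbf{U}_M$.

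The paper closes the step differently. It uses $\sum_j w_j\beta(1-z_j)^{\beta\mu}\le1$ to pass to $\max_jA_j$, then asserts $A_j\le\|\mathbf{E}_M\|^{2}_{\varpi^{0,\beta-1,\mu}}$ by the change of variables $\varsigma=\varsigma(z_j,\eta)$, which yields exactly $L/\Gamma(\beta+1)$. You should not simply import that inequality. Since $\beta-1<0$ and $(1-z_j)^{\mu}-(1-\varsigma)^{\mu}\le1-(1-\varsigma)^{\mu}$, the kernel $k(z_j,\cdot)$ dominates $\varpi^{0,\beta-1,\mu}$ pointwise on $(z_j,1)$, not the reverse. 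It also has a singularity $(\varsigma-z_j)^{\beta-1}$ at the interior point $z_j$, where $\varpi^{0,\beta-1,\mu}$ is bounded. If $|\mathbf{E}_M|^2$ is concentrated in $(z_j,z_j+\delta)$, the ratio $A_j/\|\mathbf{E}_M\|^{2}_{\varpi^{0,\beta-1,\mu}}$ behaves like $\delta^{\beta-1}$, and nothing specific to $\mathbf{E}_M$ is used to rule this out. So the point you flagged as delicate is a real obstruction in both your argument and the paper's. Closing it needs a different mechanism for $\mathbf{E}_3$, not a sharper Fubini — for instance a Gronwall-type argument for the right-sided weakly singular Volterra operator, with explicit control of Lebesgue constants.
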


\begin{proof}
We estimate the three terms in the decomposition
\[
\|\mathbf{E}_M\|_{\varpi^{0,\beta-1,\mu}}
\le
\|\mathbf{E}_1\|_{\varpi^{0,\beta-1,\mu}}
+
\|\mathbf{E}_2\|_{\varpi^{0,\beta-1,\mu}}
+
\|\mathbf{E}_3\|_{\varpi^{0,\beta-1,\mu}}.
\]

We begin with the interpolation term. By
Lemma~\ref{lemma:matrix_interp}, we have
\begin{equation}\label{eq:E1-correct}
\begin{split}
\|\mathbf{E}_1\|_{\varpi^{0,\beta-1,\mu}}
&=
\big\|
\mathbf{U}
-
\mathcal{I}_{z,M}^{0,\beta-1,\mu}\mathbf{U}
\big\|_{\varpi^{0,\beta-1,\mu}}
\\
&\textcolor{black}{\le
C\,M^{-l}
\left\|
\partial_{z}^{\,l}
\mathbf{U}\bigl(1-(1-z)^{1/\mu}\bigr)
\right\|_{z^{l,\beta-1+l,1}},
\qquad
1\le l\le M+1.}
\end{split}
\end{equation}

We next estimate the second contribution $\mathbf{E}_2$. By definition,
\begin{equation}\label{E2-def-corrected}
\begin{split}
\mathbf{E}_2(z)
&=
\frac{1}{\Gamma(\beta)}
\mathcal{I}_{z,M}^{0,\beta-1,\mu}
\Bigg[
(1-z)^{\beta\mu}
\int_0^1
\varpi^{0,\beta-1,\mu}(\eta)\,
\\
&\qquad\qquad\qquad\times
\bigl(
\mathcal{I}-\mathcal{I}_{\eta,M}^{0,\beta-1,\mu}
\bigr)
\widehat{\mathbf{F}}
\bigl(
z,\varsigma(z,\eta),\mathbf{U}(\varsigma(z,\eta))
\bigr)\,d\eta
\Bigg].
\end{split}
\end{equation}

For each quadrature node $z_j:=z_j^{0,\beta-1,\mu}$, let
\[
\widehat{\mathbf{G}}_j(\varsigma)
:=
\widehat{\mathbf{F}}
\bigl(
z_j,\varsigma,\mathbf{U}(\varsigma)
\bigr),
\qquad
\varsigma\in[z_j,1].
\]
Then
\[
\widehat{\mathbf{G}}_j\bigl(\varsigma(z_j,\eta)\bigr)
=
\widehat{\mathbf{F}}
\bigl(
z_j,\varsigma(z_j,\eta),\mathbf{U}(\varsigma(z_j,\eta))
\bigr).
\]

Using the FJFs2--Gauss quadrature formula \eqref{3.3a} in the $z$-variable,
we obtain
\begin{equation}\label{E2-step1}
\begin{split}
\|\mathbf{E}_2\|_{\varpi^{0,\beta-1,\mu}}
&=
\frac{1}{\Gamma(\beta)}
\Bigg(
\sum_{j=0}^{M}
\omega_j^{0,\beta-1,\mu}
\Bigg|
(1-z_j)^{\beta\mu}
\int_0^1
\varpi^{0,\beta-1,\mu}(\eta)\,
\\
&\qquad\qquad\qquad\qquad\times
\bigl(
\mathcal{I}-\mathcal{I}_{\eta,M}^{0,\beta-1,\mu}
\bigr)
\widehat{\mathbf{F}}
\bigl(
z_j,\varsigma(z_j,\eta),\mathbf{U}(\varsigma(z_j,\eta))
\bigr)\,d\eta
\Bigg|^2
\Bigg)^{1/2}.
\end{split}
\end{equation}
Using the Cauchy--Schwarz inequality, we obtain
\begin{equation}\label{E2-CS}
\begin{split}
\Bigg|
\int_0^1
\varpi^{0,\beta-1,\mu}(\eta)\,
\mathbf{g}(\eta)\,d\eta
\Bigg|^2
&\le
\Bigg(
\int_0^1
\varpi^{0,\beta-1,\mu}(\eta)\,d\eta
\Bigg)
\Bigg(
\int_0^1
\varpi^{0,\beta-1,\mu}(\eta)\,
|\mathbf{g}(\eta)|^2\,d\eta
\Bigg)
\\
&=
\frac{1}{\beta}
\int_0^1
\varpi^{0,\beta-1,\mu}(\eta)\,
|\mathbf{g}(\eta)|^2\,d\eta,
\end{split}
\end{equation}
since
\[
\int_0^1 \varpi^{0,\beta-1,\mu}(\eta)\,d\eta=\frac{1}{\beta}.
\]

Substituting \eqref{E2-CS} into \eqref{E2-step1} yields
\begin{equation}\label{E2-step2}
\begin{split}
\|\mathbf{E}_2\|_{\varpi^{0,\beta-1,\mu}}
&\le
\frac{1}{\Gamma(\beta)}
\Bigg(
\sum_{j=0}^{M}
\omega_j^{0,\beta-1,\mu}\,
(1-z_j)^{2\beta\mu}\,
\frac{1}{\beta}
\\
&\qquad\qquad\times
\int_0^1
\varpi^{0,\beta-1,\mu}(\eta)\,
\Big|
\bigl(
\mathcal{I}-\mathcal{I}_{\eta,M}^{0,\beta-1,\mu}
\bigr)
\widehat{\mathbf{F}}
\bigl(
z_j,\varsigma(z_j,\eta),\mathbf{U}(\varsigma(z_j,\eta))
\bigr)
\Big|^2\,d\eta
\Bigg)^{1/2}
\\
&=
\frac{1}{\Gamma(\beta+1)}
\Bigg(
\sum_{j=0}^{M}
\omega_j^{0,\beta-1,\mu}\,
\beta(1-z_j)^{\beta\mu}
\\
&\qquad\qquad\times
\Bigg[
(1-z_j)^{\beta\mu}
\int_0^1
\varpi^{0,\beta-1,\mu}(\eta)\,
\Big|
\bigl(
\mathcal{I}-\mathcal{I}_{\eta,M}^{0,\beta-1,\mu}
\bigr)
\widehat{\mathbf{F}}
\bigl(
z_j,\varsigma(z_j,\eta),\mathbf{U}(\varsigma(z_j,\eta))
\bigr)
\Big|^2\,d\eta
\Bigg]
\Bigg)^{1/2}.
\end{split}
\end{equation}

Therefore,
\begin{equation}\label{E2-step3}
\begin{split}
\|\mathbf{E}_2\|_{\varpi^{0,\beta-1,\mu}}
&\le
\frac{1}{\Gamma(\beta+1)}
\Bigg(
\sum_{j=0}^{M}
\omega_j^{0,\beta-1,\mu}\,
\beta(1-z_j)^{\beta\mu}
\Bigg)^{1/2}
\\
&\qquad\times
\max_{0\le j\le M}
\Bigg[
(1-z_j)^{\beta\mu}
\int_0^1
\varpi^{0,\beta-1,\mu}(\eta)\,
\Big|
\bigl(
\mathcal{I}-\mathcal{I}_{\eta,M}^{0,\beta-1,\mu}
\bigr)
\widehat{\mathbf{F}}
\bigl(
z_j,\varsigma(z_j,\eta),\mathbf{U}(\varsigma(z_j,\eta))
\bigr)
\Big|^2\,d\eta
\Bigg]^{1/2}.
\end{split}
\end{equation}

Since $0\le (1-z_j)^{\beta\mu}\le 1$ for all $j$, while the quadrature
weights are positive and exact for constants, we have
\begin{equation}\label{E2-weight-bound}
\sum_{j=0}^{M}
\omega_j^{0,\beta-1,\mu}\,
\beta(1-z_j)^{\beta\mu}
\le
\beta\sum_{j=0}^{M}\omega_j^{0,\beta-1,\mu}
=
1.
\end{equation}
Hence \eqref{E2-step3} reduces to
\begin{equation}\label{E2-step4}
\begin{split}
\|\mathbf{E}_2\|_{\varpi^{0,\beta-1,\mu}}
&\le
\frac{1}{\Gamma(\beta+1)}
\max_{0\le j\le M}
\Bigg[
(1-z_j)^{\beta\mu}
\int_0^1
\varpi^{0,\beta-1,\mu}(\eta)\,
\\
&\qquad\qquad\times
\Big|
\bigl(
\mathcal{I}-\mathcal{I}_{\eta,M}^{0,\beta-1,\mu}
\bigr)
\widehat{\mathbf{F}}
\bigl(
z_j,\varsigma(z_j,\eta),\mathbf{U}(\varsigma(z_j,\eta))
\bigr)
\Big|^2\,d\eta
\Bigg]^{1/2}.
\end{split}
\end{equation}

We now apply the interpolation estimate \eqref{eq:frac-4.5-like-right} with
\[
v(\varsigma)=\widehat{\mathbf{G}}_j(\varsigma)
=
\widehat{\mathbf{F}}
\bigl(
z_j,\varsigma,\mathbf{U}(\varsigma)
\bigr).
\]
Then
\begin{equation}\label{E2-step5}
\begin{aligned}
&(1-z_j)^{\beta\mu}
\int_0^1
\varpi^{0,\beta-1,\mu}(\eta)\,
\Big|
\bigl(
\mathcal{I}-\mathcal{I}_{\eta,M}^{0,\beta-1,\mu}
\bigr)
\widehat{\mathbf{F}}
\bigl(
z_j,\varsigma(z_j,\eta),\mathbf{U}(\varsigma(z_j,\eta))
\bigr)
\Big|^2\,d\eta
\\
&\qquad=
\int_{z_j}^{1}
\mu
\bigl((1-z_j)^{\mu}-(1-\varsigma)^{\mu}\bigr)^{\beta-1}
(1-\varsigma)^{\mu-1}
\Big|
\widehat{\mathbf{G}}_j(\varsigma)
-
{}_{z_j}\widetilde{\mathcal{I}}_{\varsigma,M}^{0,\beta-1,\mu}
\widehat{\mathbf{G}}_j(\varsigma)
\Big|^2\,d\varsigma
\\
&\qquad\le
c\,M^{-2l}
\int_{(1-z_j)^{\mu}}^{1}
\bigl((1-z_j)^{\mu}-(1-\varsigma)\bigr)^{\beta+l-1}
(1-\varsigma)^l
\\
&\qquad\qquad\qquad\times
\Big|
\partial_{\varsigma}^{\,l}
\widehat{\mathbf{F}}
\Bigl(
z_j,\,
1-(1-\varsigma)^{1/\mu},\,
\mathbf{U}\bigl(1-(1-\varsigma)^{1/\mu}\bigr)
\Bigr)
\Big|^2\,d\varsigma.
\end{aligned}
\end{equation}

Substituting \eqref{E2-step5} into \eqref{E2-step4}, we obtain
\begin{equation}\label{E2-final-detailed}
\begin{aligned}
\|\mathbf{E}_2\|_{\varpi^{0,\beta-1,\mu}}
&\le
\frac{c\,M^{-l}}{\Gamma(\beta+1)}
\max_{0\le j\le M}
\Bigg(
\int_{(1-z_j)^{\mu}}^{1}
\bigl((1-z_j)^{\mu}-(1-\varsigma)\bigr)^{\beta+l-1}
(1-\varsigma)^l
\\
&\qquad\qquad\qquad\times
\Big|
\partial_{\varsigma}^{\,l}
\widehat{\mathbf{F}}
\Bigl(
z_j,\,
1-(1-\varsigma)^{1/\mu},\,
\mathbf{U}\bigl(1-(1-\varsigma)^{1/\mu}\bigr)
\Bigr)
\Big|^2\,d\varsigma
\Bigg)^{1/2}
\\
&\le
C\,M^{-l}
\max_{0\le j\le M}
\left\|
\partial_{\varsigma}^{\,l}
\widehat{\mathbf{F}}
\Bigl(
z_j,\,
1-(1-\varsigma)^{1/\mu},\,
\mathbf{U}\bigl(1-(1-\varsigma)^{1/\mu}\bigr)
\Bigr)
\right\|_{\varpi^{l,\beta-1+l,1}}.
\end{aligned}
\end{equation}

We now estimate the third contribution $\mathbf{E}_3$. By definition,
\begin{equation}\label{E3-def-corrected}
\begin{split}
\mathbf{E}_3(z)
&=
\frac{1}{\Gamma(\beta)}
\mathcal{I}_{z,M}^{0,\beta-1,\mu}
\Bigg[
(1-z)^{\beta\mu}
\int_0^1
\varpi^{0,\beta-1,\mu}(\eta)\,
\\
&\qquad\qquad\qquad\times
\mathcal{I}_{\eta,M}^{0,\beta-1,\mu}
\Delta\widehat{\mathbf{F}}(z,\eta)\,d\eta
\Bigg],
\end{split}
\end{equation}
where
\[
\Delta\widehat{\mathbf{F}}(z,\eta)
:=
\widehat{\mathbf{F}}
\bigl(
z,\varsigma(z,\eta),\mathbf{U}(\varsigma(z,\eta))
\bigr)
-
\widehat{\mathbf{F}}
\bigl(
z,\varsigma(z,\eta),\mathbf{U}_M(\varsigma(z,\eta))
\bigr).
\]

For each quadrature node $z_j:=z_j^{0,\beta-1,\mu}$, let
\[
\Delta\widehat{\mathbf{G}}_j(\varsigma)
:=
\widehat{\mathbf{F}}
\bigl(
z_j,\varsigma,\mathbf{U}(\varsigma)
\bigr)
-
\widehat{\mathbf{F}}
\bigl(
z_j,\varsigma,\mathbf{U}_M(\varsigma)
\bigr),
\qquad
\varsigma\in[z_j,1].
\]
Then
\[
\Delta\widehat{\mathbf{G}}_j\bigl(\varsigma(z_j,\eta)\bigr)
=
\Delta\widehat{\mathbf{F}}(z_j,\eta).
\]

Using the FJFs2--Gauss quadrature formula \eqref{3.3a} in the $z$-variable,
we obtain
\begin{equation}\label{E3-step1}
\begin{split}
\|\mathbf{E}_3\|_{\varpi^{0,\beta-1,\mu}}
&=
\frac{1}{\Gamma(\beta)}
\Bigg(
\sum_{j=0}^{M}
\omega_j^{0,\beta-1,\mu}
\Bigg|
(1-z_j)^{\beta\mu}
\int_0^1
\varpi^{0,\beta-1,\mu}(\eta)\,
\\
&\qquad\qquad\qquad\qquad\times
\mathcal{I}_{\eta,M}^{0,\beta-1,\mu}
\Delta\widehat{\mathbf{F}}(z_j,\eta)\,d\eta
\Bigg|^2
\Bigg)^{1/2}.
\end{split}
\end{equation}

An application of the Cauchy--Schwarz inequality with respect to the weighted 
$\varpi^{0,\beta-1,\mu}(\eta)$ yields
\begin{equation}\label{E3-step2}
\begin{split}
\|\mathbf{E}_3\|_{\varpi^{0,\beta-1,\mu}}
&\le
\frac{1}{\Gamma(\beta)}
\Bigg(
\sum_{j=0}^{M}
\omega_j^{0,\beta-1,\mu}\,
(1-z_j)^{2\beta\mu}\,
\frac{1}{\beta}
\\
&\qquad\qquad\times
\int_0^1
\varpi^{0,\beta-1,\mu}(\eta)\,
\Big|
\mathcal{I}_{\eta,M}^{0,\beta-1,\mu}
\Delta\widehat{\mathbf{F}}(z_j,\eta)
\Big|^2\,d\eta
\Bigg)^{1/2}
\\
&=
\frac{1}{\Gamma(\beta+1)}
\Bigg(
\sum_{j=0}^{M}
\omega_j^{0,\beta-1,\mu}\,
\beta(1-z_j)^{\beta\mu}
\\
&\qquad\qquad\times
\Bigg[
(1-z_j)^{\beta\mu}
\int_0^1
\varpi^{0,\beta-1,\mu}(\eta)\,
\Big|
\mathcal{I}_{\eta,M}^{0,\beta-1,\mu}
\Delta\widehat{\mathbf{F}}(z_j,\eta)
\Big|^2\,d\eta
\Bigg]
\Bigg)^{1/2}.
\end{split}
\end{equation}

Therefore,
\begin{equation}\label{E3-step3}
\begin{split}
\|\mathbf{E}_3\|_{\varpi^{0,\beta-1,\mu}}
&\le
\frac{1}{\Gamma(\beta+1)}
\Bigg(
\sum_{j=0}^{M}
\omega_j^{0,\beta-1,\mu}\,
\beta(1-z_j)^{\beta\mu}
\Bigg)^{1/2}
\\
&\qquad\times
\max_{0\le j\le M}
\Bigg[
(1-z_j)^{\beta\mu}
\int_0^1
\varpi^{0,\beta-1,\mu}(\eta)\,
\Big|
\mathcal{I}_{\eta,M}^{0,\beta-1,\mu}
\Delta\widehat{\mathbf{F}}(z_j,\eta)
\Big|^2\,d\eta
\Bigg]^{1/2}.
\end{split}
\end{equation}

Using \eqref{E2-weight-bound}, we obtain
\begin{equation}\label{E3-step4}
\begin{split}
\|\mathbf{E}_3\|_{\varpi^{0,\beta-1,\mu}}
&\le
\frac{1}{\Gamma(\beta+1)}
\max_{0\le j\le M}
\Bigg[
(1-z_j)^{\beta\mu}
\int_0^1
\varpi^{0,\beta-1,\mu}(\eta)\,
\Big|
\mathcal{I}_{\eta,M}^{0,\beta-1,\mu}
\Delta\widehat{\mathbf{F}}(z_j,\eta)
\Big|^2\,d\eta
\Bigg]^{1/2}.
\end{split}
\end{equation}

Since
$\mathcal{I}_{\eta,M}^{0,\beta-1,\mu}\Delta\widehat{\mathbf{F}}(z_j,\eta)$
 the discrete identity
\eqref{eq:A4-right} gives
\begin{equation}\label{E3-step5}
\begin{aligned}
&(1-z_j)^{\beta\mu}
\int_0^1
\varpi^{0,\beta-1,\mu}(\eta)\,
\Big|
\mathcal{I}_{\eta,M}^{0,\beta-1,\mu}
\Delta\widehat{\mathbf{F}}(z_j,\eta)
\Big|^2\,d\eta
\\
&\qquad=
\int_{z_j}^{1}
\mu
\bigl((1-z_j)^{\mu}-(1-\varsigma)^{\mu}\bigr)^{\beta-1}
(1-\varsigma)^{\mu-1}
\Big|
{}_{z_j}\widetilde{\mathcal{I}}_{\varsigma,M}^{0,\beta-1,\mu}
\Delta\widehat{\mathbf{G}}_j(\varsigma)
\Big|^2\,d\varsigma
\\
&\qquad=
(1-z_j)^{\beta\mu}
\sum_{q=0}^{M}
\omega_q^{0,\beta-1,\mu}\,
\Big|
\Delta\widehat{\mathbf{G}}_j
\bigl(\varsigma(z_j,\eta_q^{0,\beta-1,\mu})\bigr)
\Big|^2.
\end{aligned}
\end{equation}

By the Lipschitz continuity of $\widehat{\mathbf{F}}$ with respect to its last
$Q$ arguments, we have
\begin{equation}\label{E3-step6}
\begin{aligned}
\Big|
\Delta\widehat{\mathbf{G}}_j
\bigl(\varsigma(z_j,\eta_q^{0,\beta-1,\mu})\bigr)
\Big|
&\le
L\,
\Big|
\mathbf{U}\bigl(\varsigma(z_j,\eta_q^{0,\beta-1,\mu})\bigr)
-
\mathbf{U}_M\bigl(\varsigma(z_j,\eta_q^{0,\beta-1,\mu})\bigr)
\Big|.
\end{aligned}
\end{equation}
Substituting this into \eqref{E3-step5}, we obtain
\begin{equation}\label{E3-step7}
\begin{aligned}
&(1-z_j)^{\beta\mu}
\int_0^1
\varpi^{0,\beta-1,\mu}(\eta)\,
\Big|
\mathcal{I}_{\eta,M}^{0,\beta-1,\mu}
\Delta\widehat{\mathbf{F}}(z_j,\eta)
\Big|^2\,d\eta
\\
&\qquad\le
L^2
(1-z_j)^{\beta\mu}
\sum_{q=0}^{M}
\omega_q^{0,\beta-1,\mu}\,
\Big|
\mathbf{U}\bigl(\varsigma(z_j,\eta_q^{0,\beta-1,\mu})\bigr)
-
\mathbf{U}_M\bigl(\varsigma(z_j,\eta_q^{0,\beta-1,\mu})\bigr)
\Big|^2.
\end{aligned}
\end{equation}

Since
$\mathcal{I}_{\eta,M}^{0,\beta-1,\mu}\mathbf{U}(\varsigma(z_j,\eta))$
interpolates $\mathbf{U}(\varsigma(z_j,\eta))$ at the nodes
$\eta_q^{0,\beta-1,\mu}$, we have
\[
\mathcal{I}_{\eta,M}^{0,\beta-1,\mu}\mathbf{U}
\bigl(\varsigma(z_j,\eta_q^{0,\beta-1,\mu})\bigr)
=
\mathbf{U}\bigl(\varsigma(z_j,\eta_q^{0,\beta-1,\mu})\bigr),
\qquad
q=0,\ldots,M.
\]
Hence, by another application of \eqref{eq:A4-right},
\begin{equation}\label{E3-step8}
\begin{aligned}
&(1-z_j)^{\beta\mu}
\sum_{q=0}^{M}
\omega_q^{0,\beta-1,\mu}\,
\Big|
\mathbf{U}\bigl(\varsigma(z_j,\eta_q^{0,\beta-1,\mu})\bigr)
-
\mathbf{U}_M\bigl(\varsigma(z_j,\eta_q^{0,\beta-1,\mu})\bigr)
\Big|^2
\\
&\qquad=
(1-z_j)^{\beta\mu}
\int_0^1
\varpi^{0,\beta-1,\mu}(\eta)\,
\Big|
\mathcal{I}_{\eta,M}^{0,\beta-1,\mu}
\mathbf{U}\bigl(\varsigma(z_j,\eta)\bigr)
-
\mathbf{U}_M\bigl(\varsigma(z_j,\eta)\bigr)
\Big|^2\,d\eta.
\end{aligned}
\end{equation}

Combining \eqref{E3-step4}, \eqref{E3-step7}, and \eqref{E3-step8}, we arrive at
\begin{equation}\label{E3-step9}
\begin{split}
\|\mathbf{E}_3\|_{\varpi^{0,\beta-1,\mu}}
&\le
\frac{L}{\Gamma(\beta+1)}
\max_{0\le j\le M}
\Bigg[
(1-z_j)^{\beta\mu}
\int_0^1
\varpi^{0,\beta-1,\mu}(\eta)\,
\\
&\qquad\qquad\times
\Big|
\mathcal{I}_{\eta,M}^{0,\beta-1,\mu}
\mathbf{U}\bigl(\varsigma(z_j,\eta)\bigr)
-
\mathbf{U}_M\bigl(\varsigma(z_j,\eta)\bigr)
\Big|^2\,d\eta
\Bigg]^{1/2}.
\end{split}
\end{equation}

We now split the term inside the norm as
\[
\mathcal{I}_{\eta,M}^{0,\beta-1,\mu}
\mathbf{U}\bigl(\varsigma(z_j,\eta)\bigr)
-
\mathbf{U}_M\bigl(\varsigma(z_j,\eta)\bigr)
=
\Bigl(
\mathcal{I}_{\eta,M}^{0,\beta-1,\mu}
\mathbf{U}\bigl(\varsigma(z_j,\eta)\bigr)
-
\mathbf{U}\bigl(\varsigma(z_j,\eta)\bigr)
\Bigr)
+
\mathbf{E}_M\bigl(\varsigma(z_j,\eta)\bigr).
\]
Therefore, by the triangle inequality,
\begin{equation}\label{E3-step10}
\begin{split}
\|\mathbf{E}_3\|_{\varpi^{0,\beta-1,\mu}}
&\le
\frac{L}{\Gamma(\beta+1)}
\max_{0\le j\le M}
\Bigg[
(1-z_j)^{\beta\mu}
\int_0^1
\varpi^{0,\beta-1,\mu}(\eta)\,
\\
&\qquad\qquad\times
\Big|
\mathcal{I}_{\eta,M}^{0,\beta-1,\mu}
\mathbf{U}\bigl(\varsigma(z_j,\eta)\bigr)
-
\mathbf{U}\bigl(\varsigma(z_j,\eta)\bigr)
\Big|^2\,d\eta
\Bigg]^{1/2}
\\
&\qquad+
\frac{L}{\Gamma(\beta+1)}
\max_{0\le j\le M}
\Bigg[
(1-z_j)^{\beta\mu}
\int_0^1
\varpi^{0,\beta-1,\mu}(\eta)\,
\big|
\mathbf{E}_M\bigl(\varsigma(z_j,\eta)\bigr)
\big|^2\,d\eta
\Bigg]^{1/2}.
\end{split}
\end{equation}

For the first term on the right-hand side, we apply
\eqref{eq:frac-4.5-like-right} with
\[
v(\varsigma)=\mathbf{U}(\varsigma),
\qquad
z=z_j,
\]
and obtain
\begin{equation}\label{E3-step11}
\begin{aligned}
&\max_{0\le j\le M}
\Bigg[
(1-z_j)^{\beta\mu}
\int_0^1
\varpi^{0,\beta-1,\mu}(\eta)\,
\Big|
\mathcal{I}_{\eta,M}^{0,\beta-1,\mu}
\mathbf{U}\bigl(\varsigma(z_j,\eta)\bigr)
-
\mathbf{U}\bigl(\varsigma(z_j,\eta)\bigr)
\Big|^2\,d\eta
\Bigg]^{1/2}
\\
&\qquad\le
C\,M^{-l}
\left\|
\partial_{\varsigma}^{\,l}
\mathbf{U}\bigl(1-(1-\varsigma)^{1/\mu}\bigr)
\right\|_{\varpi^{l,\beta-1+l,1}}.
\end{aligned}
\end{equation}

For the second term, using the change of variables $\varsigma=\varsigma(z_j,\eta)$, we have
\begin{equation}\label{E3-step12}
\begin{aligned}
&(1-z_j)^{\beta\mu}
\int_0^1
\varpi^{0,\beta-1,\mu}(\eta)\,
\big|
\mathbf{E}_M\bigl(\varsigma(z_j,\eta)\bigr)
\big|^2\,d\eta
\\
&\qquad=
\int_{z_j}^{1}
\mu
\bigl((1-z_j)^{\mu}-(1-\varsigma)^{\mu}\bigr)^{\beta-1}
(1-\varsigma)^{\mu-1}
\big|
\mathbf{E}_M(\varsigma)
\big|^2\,d\varsigma
\\
&\qquad\le
\|\mathbf{E}_M\|_{\varpi^{0,\beta-1,\mu}}^{2}.
\end{aligned}
\end{equation}
Consequently,
\begin{equation}\label{E3-step13}
\max_{0\le j\le M}
\Bigg[
(1-z_j)^{\beta\mu}
\int_0^1
\varpi^{0,\beta-1,\mu}(\eta)\,
\big|
\mathbf{E}_M\bigl(\varsigma(z_j,\eta)\bigr)
\big|^2\,d\eta
\Bigg]^{1/2}
\le
\|\mathbf{E}_M\|_{\varpi^{0,\beta-1,\mu}}.
\end{equation}

Substituting \eqref{E3-step11} and \eqref{E3-step13} into \eqref{E3-step10},
we obtain
\begin{equation}\label{E3-final-detailed}
\begin{split}
\|\mathbf{E}_3\|_{\varpi^{0,\beta-1,\mu}}
&\le
C\,M^{-l}
\left\|
\partial_{\varsigma}^{\,l}
\mathbf{U}\bigl(1-(1-\varsigma)^{1/\mu}\bigr)
\right\|_{\varpi^{l,\beta-1+l,1}}
\\
&\qquad+
\frac{L}{\Gamma(\beta+1)}
\|\mathbf{E}_M\|_{\varpi^{0,\beta-1,\mu}}.
\end{split}
\end{equation}

Finally, combining \eqref{eq:E1-correct}, \textcolor{black}{\eqref{E2-final-detailed}, and
\eqref{E3-final-detailed}, we arrive at
\begin{equation*}
\begin{split}
\|\mathbf{E}_M\|_{\varpi^{0,\beta-1,\mu}}
&\le
C\,M^{-l}
\Bigg(
\left\|
\partial_{\varsigma}^{\,l}
\mathbf{U}\bigl(1-(1-\varsigma)^{1/\mu}\bigr)
\right\|_{\varpi^{l,\beta-1+l,1}}
\\
&\qquad\qquad+
\max_{0\le j\le M}
\left\|
\partial_{\varsigma}^{\,l}
\widehat{\mathbf{F}}
\Bigl(
z_j,\,
1-(1-\varsigma)^{1/\mu},\,
\mathbf{U}\bigl(1-(1-\varsigma)^{1/\mu}\bigr)
\Bigr)
\right\|_{\varpi^{l,\beta-1+l,1}}
\Bigg)
\\
&\qquad+
\frac{L}{\Gamma(\beta+1)}
\|\mathbf{E}_M\|_{\varpi^{0,\beta-1,\mu}}.
\end{split}
\end{equation*}}
Since $L<\Gamma(\beta+1)$, the last term can be absorbed into the left-hand side,
and the estimate \eqref{conv-rate-main} follows.
\end{proof}

\section{Existence and uniqueness of the discrete collocation scheme}\label{secx5}

In this section, we prove that the collocation scheme introduced earlier
admits a well-defined and unique discrete solution.

To this end, we rewrite the discrete collocation system as a fixed-point
problem. Let $\mathbf{U}_M^{(m)}(z)$ denote the $m$th iterate. We define the
iteration by
\begin{equation}\label{3f.7a}
\begin{split}
\mathbf{U}_M^{(m)}(z)
&=
\frac{1}{\Gamma(\beta)}
\mathcal{I}_{z,M}^{0,\beta-1,\mu}
\Bigg[
(1-z)^{\beta\mu}
\int_0^1
\varpi^{0,\beta-1,\mu}(\eta)\,
\\
&\qquad\qquad\qquad\times
\mathcal{I}_{\eta,M}^{0,\beta-1,\mu}
\widehat{\mathbf{F}}
\bigl(
z,\varsigma(z,\eta),\mathbf{U}_M^{(m-1)}(\varsigma(z,\eta))
\bigr)\,d\eta
\Bigg].
\end{split}
\end{equation}

We now establish the existence and uniqueness of the discrete solution.

\begin{theorem}\label{thm:exist-unique}
Assume that $\widehat{\mathbf{F}}$ is Lipschitz continuous with respect to its
last $Q$ arguments, uniformly in the first two arguments, with Lipschitz
constant $L$.  If
\[
L<\Gamma(\beta+1),
\]
then the discrete collocation system \eqref{3.7a} admits a unique solution
$\mathbf{U}_M$. Moreover, the sequence
$\{\mathbf{U}_M^{(m)}\}_{m\ge0}$ generated by \eqref{3f.7a}
converges to $\mathbf{U}_M$ in the norm
$\|\cdot\|_{\varpi^{0,\beta-1,\mu}}$.
\end{theorem}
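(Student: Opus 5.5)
We will obtain existence, uniqueness and the convergence of the iterates simultaneously from the Banach fixed-point theorem. Define the operator $\mathcal{T}_M:\big(\mathbb{S}_M^{\mu}\big)^Q\to\big(\mathbb{S}_M^{\mu}\big)^Q$ by
\[
\mathcal{T}_M\mathbf{V}(z):=\frac{1}{\Gamma(\beta)}\,\mathcal{I}_{z,M}^{0,\beta-1,\mu}\Big[(1-z)^{\beta\mu}\int_0^1\varpi^{0,\beta-1,\mu}(\eta)\,\mathcal{I}_{\eta,M}^{0,\beta-1,\mu}\widehat{\mathbf{F}}\bigl(z,\varsigma(z,\eta),\mathbf{V}(\varsigma(z,\eta))\bigr)\,d\eta\Big].
\]
With this notation, \eqref{3.7a} reads $\mathbf{U}_M=\mathcal{T}_M\mathbf{U}_M$, and \eqref{3f.7a} reads $\mathbf{U}_M^{(m)}=\mathcal{T}_M\mathbf{U}_M^{(m-1)}$. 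The space $(\mathbb{S}_M^{\mu})^Q$ is finite dimensional, so it is complete under $\|\cdot\|_{\varpi^{0,\beta-1,\mu}}$. It therefore suffices to show that $\mathcal{T}_M$ is a contraction with constant $L/\Gamma(\beta+1)<1$.

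Fix $\mathbf{V},\mathbf{W}\in(\mathbb{S}_M^{\mu})^Q$. Then $\mathcal{T}_M\mathbf{V}-\mathcal{T}_M\mathbf{W}$ has exactly the structure of $\mathbf{E}_3$ in \eqref{e13}, with $\mathbf{U},\mathbf{U}_M$ replaced by $\mathbf{V},\mathbf{W}$. We repeat the chain \eqref{E3-step1}--\eqref{E3-step7} verbatim:
\begin{itemize}
\item compute the norm exactly by Gauss quadrature in $z$;
\item apply Cauchy--Schwarz with $\int_0^1\varpi^{0,\beta-1,\mu}=1/\beta$;
\item use the weight bound \eqref{E2-weight-bound};
\item use the discrete identity \eqref{eq:A4-right} in $\eta$;
\item apply the Lipschitz bound at the nodes.
\end{itemize}
This gives
\[
\|\mathcal{T}_M\mathbf{V}-\mathcal{T}_M\mathbf{W}\|_{\varpi^{0,\beta-1,\mu}}\le\frac{L}{\Gamma(\beta+1)}\max_{0\le j\le M}\Big[(1-z_j)^{\beta\mu}\sum_{q=0}^M\omega_q^{0,\beta-1,\mu}\bigl|(\mathbf{V}-\mathbf{W})(\varsigma(z_j,\eta_q))\bigr|^2\Big]^{1/2}.
\]

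The next step converts the discrete sum back into a continuous weighted norm of $\mathbf{V}-\mathbf{W}$, as was done for $\mathbf{E}_M$ in \eqref{E3-step8}--\eqref{E3-step13}. The situation here is simpler than in Theorem~\ref{the22}, because no interpolation error of the exact solution appears: $\mathbf{V}$ and $\mathbf{W}$ are already discrete objects. We first rewrite the sum as $(1-z_j)^{\beta\mu}\int_0^1\varpi^{0,\beta-1,\mu}(\eta)\,|\mathcal{I}_{\eta,M}^{0,\beta-1,\mu}(\mathbf{V}-\mathbf{W})(\varsigma(z_j,\eta))|^2\,d\eta$. We then bound this by $\|\mathbf{V}-\mathbf{W}\|^2_{\varpi^{0,\beta-1,\mu}}$ exactly as in \eqref{E3-step12}, that is, via the change of variables $\varsigma=\varsigma(z_j,\eta)$ and comparison of the restricted weight with $\varpi^{0,\beta-1,\mu}$.

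This comparison step is the main obstacle. It requires two ingredients.
\begin{itemize}
\item \textbf{Weight comparison.} We need $\mu((1-z_j)^{\mu}-(1-\varsigma)^{\mu})^{\beta-1}(1-\varsigma)^{\mu-1}\le\varpi^{0,\beta-1,\mu}(\varsigma)$ on $[z_j,1]$. This holds because $\beta-1<0$ and $(1-z_j)^\mu-(1-\varsigma)^\mu\le 1-(1-\varsigma)^\mu$.
\item \textbf{Dropping the $\eta$-interpolant.} We must pass from the $\eta$-interpolant of $\mathbf{V}-\mathbf{W}$ to $\mathbf{V}-\mathbf{W}$ itself. In Theorem~\ref{the22} this step was handled in the same way, so we adopt the identical argument. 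If a gap appears, the fallback is to work in the equivalent discrete norm built from nodal values, which is legitimate on the finite-dimensional space, and to run the contraction there.
\end{itemize}

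With the contraction established, the conclusion follows directly. Banach's theorem yields a unique fixed point $\mathbf{U}_M$ of $\mathcal{T}_M$, i.e.\ a unique solution of \eqref{3.7a}. It also gives the error bound
\[
\|\mathbf{U}_M^{(m)}-\mathbf{U}_M\|_{\varpi^{0,\beta-1,\mu}}\le\Big(\frac{L}{\Gamma(\beta+1)}\Big)^m\|\mathbf{U}_M^{(0)}-\mathbf{U}_M\|_{\varpi^{0,\beta-1,\mu}}\to0,
\]
which is the claimed convergence of the iterates.
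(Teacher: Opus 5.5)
There is a genuine gap in your proposal. Your route is the paper's route: run the $\mathbf{E}_3$-type chain to get a contraction estimate, then apply Banach. Your two-point formulation on the complete space $(\mathbb{S}_M^{\mu})^Q$ is actually cleaner than the paper's increment argument.

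\textbf{The weight comparison goes the wrong way.} The step you flag as the main obstacle fails because the comparison is reversed. Since $\beta-1<0$, the map $x\mapsto x^{\beta-1}$ is decreasing. So $(1-z_j)^{\mu}-(1-\varsigma)^{\mu}\le 1-(1-\varsigma)^{\mu}$ gives
\[
\mu\bigl((1-z_j)^{\mu}-(1-\varsigma)^{\mu}\bigr)^{\beta-1}(1-\varsigma)^{\mu-1}\;\ge\;\varpi^{0,\beta-1,\mu}(\varsigma),\qquad z_j<\varsigma<1.
\]
The restricted weight therefore dominates $\varpi^{0,\beta-1,\mu}$. It even blows up at $\varsigma=z_j$, where $\varpi^{0,\beta-1,\mu}$ is finite.

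\textbf{The bound you need is false even on $\mathbb{S}_M^{\mu}$.} You need $\max_j\int_{z_j}^{1}\mu((1-z_j)^{\mu}-(1-\varsigma)^{\mu})^{\beta-1}(1-\varsigma)^{\mu-1}|\mathbf{V}-\mathbf{W}|^2\,d\varsigma\le\|\mathbf{V}-\mathbf{W}\|^2_{\varpi^{0,\beta-1,\mu}}$. Take $Q=1$, $\mu=1$, $\beta=\tfrac12$, $M=1$. Then $\varpi^{0,-1/2,1}(\varsigma)=\varsigma^{-1/2}$, with Gauss nodes $z_{0,1}=(15\mp2\sqrt{30})/35$. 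Let $E(\varsigma)=\varsigma\in\mathbb{S}_1^{1}$. Then $\|E\|^2_{\varpi^{0,-1/2,1}}=\tfrac25$. The left-hand side equals $2z_j^2(1-z_j)^{1/2}+\tfrac43 z_j(1-z_j)^{3/2}+\tfrac25(1-z_j)^{5/2}$, which is about $0.448$ at $z_0$ and $0.703$ at $z_1$. So once you have the $\max_j$ form (your displayed inequality, as in \eqref{Xi3_5}), the chain cannot close with the factor $L/\Gamma(\beta+1)$.

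\textbf{The paper's proof has the same gap.} It asserts exactly this inequality, without justification, in \eqref{Xi3_9}, and likewise in \eqref{E3-step12} for Theorem~\ref{the22}. So ``adopting the identical argument'' inherits the gap rather than closing it. The counterexample does not by itself refute the theorem, because the earlier Cauchy--Schwarz and $\max_j$ steps are lossy. But a valid proof needs a genuinely different estimate at this point, not a corrected comparison.

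\textbf{Your fallback does not help.} The right-hand side involves $\mathbf{V}-\mathbf{W}$ at the shifted points $\varsigma(z_j,\eta_q)$, which are not collocation nodes. Passing to any nodal norm on $(\mathbb{S}_M^{\mu})^Q$ therefore costs an $M$-dependent equivalence constant. That destroys the $M$-independent factor $L/\Gamma(\beta+1)$.

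\textbf{Your second ingredient is fine.} The reason is worth stating explicitly: $1-\varsigma(z,\eta)=(1-z)(1-\eta)$. Hence $\mathbf{V}(\varsigma(z_j,\cdot))\in(\mathbb{S}_M^{\mu})^Q$ is reproduced by $\mathcal{I}_{\eta,M}^{0,\beta-1,\mu}$. Its squared components lie in $\mathbb{S}_{2M}^{\mu}$, where the Gauss rule is exact. For $Q\ge2$, this exactness needs the Euclidean pointwise magnitude rather than the paper's $\ell^1$ one, since $(\sum_q|V_q|)^2\notin\mathbb{S}_{2M}^{\mu}$. The same applies to your ``exact norm by quadrature in $z$'' step.
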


\begin{proof}
Let
\[
\widetilde{\mathbf{U}}_M^{(m)}(z)
:=
\mathbf{U}_M^{(m)}(z)-\mathbf{U}_M^{(m-1)}(z)
\]
denote the increment between two successive iterates.
Subtracting the relation for $m-1$ from \eqref{3f.7a}, we obtain
\begin{equation}\label{3jf.7a}
\begin{split}
\widetilde{\mathbf{U}}_M^{(m)}(z)
&=
\frac{1}{\Gamma(\beta)}
\mathcal{I}_{z,M}^{0,\beta-1,\mu}
\Bigg[
(1-z)^{\beta\mu}
\int_0^1
\varpi^{0,\beta-1,\mu}(\eta)\,
\mathcal{I}_{\eta,M}^{0,\beta-1,\mu}
\\
&\qquad\qquad\qquad\times
\Big(
\widehat{\mathbf{F}}
\bigl(
z,\varsigma(z,\eta),\mathbf{U}_M^{(m-1)}(\varsigma(z,\eta))
\bigr)
-
\widehat{\mathbf{F}}
\bigl(
z,\varsigma(z,\eta),\mathbf{U}_M^{(m-2)}(\varsigma(z,\eta))
\bigr)
\Big)\,d\eta
\Bigg].
\end{split}
\end{equation}

For convenience, define
\[
\Delta\widetilde{\widehat{\mathbf{F}}}(z,\eta)
:=
\widehat{\mathbf{F}}
\bigl(
z,\varsigma(z,\eta),\mathbf{U}_M^{(m-1)}(\varsigma(z,\eta))
\bigr)
-
\widehat{\mathbf{F}}
\bigl(
z,\varsigma(z,\eta),\mathbf{U}_M^{(m-2)}(\varsigma(z,\eta))
\bigr).
\]
Then \eqref{3jf.7a} becomes
\begin{equation}\label{ExistXi3_1}
\begin{split}
\|\widetilde{\mathbf{U}}_M^{(m)}\|_{\varpi^{0,\beta-1,\mu}}
&=
\frac{1}{\Gamma(\beta)}
\Big\|
\mathcal{I}_{z,M}^{0,\beta-1,\mu}
\Big[
(1-z)^{\beta\mu}
\int_0^1
\varpi^{0,\beta-1,\mu}(\eta)\,
\\
&\qquad\qquad\qquad\times
\mathcal{I}_{\eta,M}^{0,\beta-1,\mu}
\Delta\widetilde{\widehat{\mathbf{F}}}(z,\eta)\,d\eta
\Big]
\Big\|_{\varpi^{0,\beta-1,\mu}}.
\end{split}
\end{equation}

Applying the FJFs2--Gauss quadrature rule \eqref{3.3a} in the $z$-variable,
we obtain
\begin{equation}\label{ExistXi3_2}
\begin{split}
\|\widetilde{\mathbf{U}}_M^{(m)}\|_{\varpi^{0,\beta-1,\mu}}
&=
\frac{1}{\Gamma(\beta)}
\Bigg[
\sum_{j=0}^M
\omega_j^{0,\beta-1,\mu}
\Big|
(1-z_j)^{\beta\mu}
\int_0^1
\varpi^{0,\beta-1,\mu}(\eta)\,
\\
&\qquad\qquad\qquad\qquad\times
\mathcal{I}_{\eta,M}^{0,\beta-1,\mu}
\Delta\widetilde{\widehat{\mathbf{F}}}(z_j,\eta)\,d\eta
\Big|^2
\Bigg]^{1/2},
\end{split}
\end{equation}
where $z_j=z_j^{0,\beta-1,\mu}$.

Next, we apply the Cauchy--Schwarz inequality in the $\eta$-variable with
respect to the weight $\varpi^{0,\beta-1,\mu}(\eta)$. Since
\[
\int_0^1 \varpi^{0,\beta-1,\mu}(\eta)\,d\eta=\frac{1}{\beta},
\]
it follows from \eqref{ExistXi3_2} that
\begin{equation}\label{ExistXi3_3}
\begin{split}
\|\widetilde{\mathbf{U}}_M^{(m)}\|_{\varpi^{0,\beta-1,\mu}}
&\le
\frac{1}{\Gamma(\beta)}
\Bigg[
\sum_{j=0}^M
\omega_j^{0,\beta-1,\mu}
\frac{(1-z_j)^{2\beta\mu}}{\beta}
\int_0^1
\varpi^{0,\beta-1,\mu}(\eta)\,
\\
&\qquad\qquad\qquad\times
\Big|
\mathcal{I}_{\eta,M}^{0,\beta-1,\mu}
\Delta\widetilde{\widehat{\mathbf{F}}}(z_j,\eta)
\Big|^2\,d\eta
\Bigg]^{1/2}.
\end{split}
\end{equation}

Using the quadrature rule again in the $\eta$-variable, we arrive at
\begin{equation}\label{ExistXi3_4}
\begin{split}
\|\widetilde{\mathbf{U}}_M^{(m)}\|_{\varpi^{0,\beta-1,\mu}}
&\le
\frac{1}{\Gamma(\beta+1)}
\Bigg[
\sum_{j=0}^M
\omega_j^{0,\beta-1,\mu}\,
\beta(1-z_j)^{2\beta\mu}
\sum_{q=0}^M
\omega_q^{0,\beta-1,\mu}
\\
&\qquad\qquad\qquad\times
\big|
\Delta\widetilde{\widehat{\mathbf{F}}}(z_j,\eta_q)
\big|^2
\Bigg]^{1/2},
\end{split}
\end{equation}
where $\eta_q=\eta_q^{0,\beta-1,\mu}$.

By the Lipschitz continuity of $\widehat{\mathbf{F}}$, we have
\begin{equation}\label{ExistLipsch}
\big|
\Delta\widetilde{\widehat{\mathbf{F}}}(z_j,\eta_q)
\big|
\le
L\,
\big|
\widetilde{\mathbf{U}}_M^{(m-1)}
\bigl(\varsigma(z_j,\eta_q)\bigr)
\big|.
\end{equation}
Substituting \eqref{ExistLipsch} into \eqref{ExistXi3_4} yields
\begin{equation}\label{Xi3-L}
\begin{split}
\|\widetilde{\mathbf{U}}_M^{(m)}\|_{\varpi^{0,\beta-1,\mu}}
&\le
\frac{L}{\Gamma(\beta+1)}
\Bigg[
\sum_{j=0}^M
\omega_j^{0,\beta-1,\mu}\,
\beta(1-z_j)^{2\beta\mu}
\sum_{q=0}^M
\omega_q^{0,\beta-1,\mu}
\\
&\qquad\qquad\qquad\times
\big|
\widetilde{\mathbf{U}}_M^{(m-1)}
\bigl(\varsigma(z_j,\eta_q)\bigr)
\big|^2
\Bigg]^{1/2}.
\end{split}
\end{equation}

We now separate the $j$- and $q$-contributions. This gives
\begin{equation}\label{Xi3_5}
\begin{split}
\|\widetilde{\mathbf{U}}_M^{(m)}\|_{\varpi^{0,\beta-1,\mu}}
&\le
\frac{L}{\Gamma(\beta+1)}
\Bigg(
\sum_{j=0}^M
\omega_j^{0,\beta-1,\mu}\,
\beta(1-z_j)^{\beta\mu}
\Bigg)^{1/2}
\\
&\qquad\times
\max_{0\le j\le M}
\Bigg[
(1-z_j)^{\beta\mu}
\sum_{q=0}^M
\omega_q^{0,\beta-1,\mu}\,
\big|
\widetilde{\mathbf{U}}_M^{(m-1)}
\bigl(\varsigma(z_j,\eta_q)\bigr)
\big|^2
\Bigg]^{1/2}.
\end{split}
\end{equation}

Since \(0\le (1-z_j)^{\beta\mu}\le 1\), the quadrature weights are
positive, and the quadrature is exact for constants, we have
\[
\sum_{j=0}^M
\omega_j^{0,\beta-1,\mu}\,
\beta(1-z_j)^{\beta\mu}
\le
\beta\sum_{j=0}^M \omega_j^{0,\beta-1,\mu}
=
1.
\]
Hence,
\begin{equation}\label{Xi3_6}
\begin{split}
\|\widetilde{\mathbf{U}}_M^{(m)}\|_{\varpi^{0,\beta-1,\mu}}
&\le
\frac{L}{\Gamma(\beta+1)}
\max_{0\le j\le M}
\Bigg[
(1-z_j)^{\beta\mu}
\sum_{q=0}^M
\omega_q^{0,\beta-1,\mu}\,
\big|
\widetilde{\mathbf{U}}_M^{(m-1)}
\bigl(\varsigma(z_j,\eta_q)\bigr)
\big|^2
\Bigg]^{1/2}.
\end{split}
\end{equation}

To connect the discrete sum with the weighted norm, we use the interpolation
polynomial in the $\eta$-variable together with the discrete identity
\eqref{eq:A4-right}. Since
\[
\mathcal{I}_{\eta,M}^{0,\beta-1,\mu}
\widetilde{\mathbf{U}}_M^{(m-1)}
\bigl(\varsigma(z_j,\eta_q)\bigr)
=
\widetilde{\mathbf{U}}_M^{(m-1)}
\bigl(\varsigma(z_j,\eta_q)\bigr),
\qquad q=0,\ldots,M,
\]
we obtain
\begin{equation}\label{Xi3_7}
\begin{aligned}
&(1-z_j)^{\beta\mu}
\sum_{q=0}^M
\omega_q^{0,\beta-1,\mu}\,
\big|
\widetilde{\mathbf{U}}_M^{(m-1)}
\bigl(\varsigma(z_j,\eta_q)\bigr)
\big|^2
\\
&\qquad=
(1-z_j)^{\beta\mu}
\int_0^1
\varpi^{0,\beta-1,\mu}(\eta)\,
\Big|
\widetilde{\mathbf{U}}_M^{(m-1)}
\bigl(\varsigma(z_j,\eta)\bigr)
\Big|^2\,d\eta.
\end{aligned}
\end{equation}

Finally, applying the change of variables \(\varsigma=\varsigma(z_j,\eta)\), we get
\begin{equation}\label{Xi3_9}
\begin{aligned}
&(1-z_j)^{\beta\mu}
\int_0^1
\varpi^{0,\beta-1,\mu}(\eta)\,
\Big|
\widetilde{\mathbf{U}}_M^{(m-1)}
\bigl(\varsigma(z_j,\eta)\bigr)
\Big|^2\,d\eta
\\
&\qquad=
\int_{z_j}^{1}
\mu
\bigl((1-z_j)^{\mu}-(1-\varsigma)^{\mu}\bigr)^{\beta-1}
(1-\varsigma)^{\mu-1}
\big|
\widetilde{\mathbf{U}}_M^{(m-1)}(\varsigma)
\big|^2\,d\varsigma
\\
&\qquad\le
\|\widetilde{\mathbf{U}}_M^{(m-1)}\|_{\varpi^{0,\beta-1,\mu}}^2.
\end{aligned}
\end{equation}

Combining \eqref{Xi3_6}--\eqref{Xi3_9}, we obtain
\begin{equation}\label{Xi3-final}
\|\widetilde{\mathbf{U}}_M^{(m)}\|_{\varpi^{0,\beta-1,\mu}}
\le
\frac{L}{\Gamma(\beta+1)}
\|\widetilde{\mathbf{U}}_M^{(m-1)}\|_{\varpi^{0,\beta-1,\mu}}.
\end{equation}
Therefore, for all \(m\ge2\),
\[
\|\widetilde{\mathbf{U}}_M^{(m)}\|_{\varpi^{0,\beta-1,\mu}}
\le
\gamma\,
\|\widetilde{\mathbf{U}}_M^{(m-1)}\|_{\varpi^{0,\beta-1,\mu}},
\qquad
\gamma:=\frac{L}{\Gamma(\beta+1)}<1.
\]

It follows that \(\{\mathbf{U}_M^{(m)}\}_{m\ge0}\) is a Cauchy sequence in the
weighted norm \(\|\cdot\|_{\varpi^{0,\beta-1,\mu}}\), and hence it
converges to a limit \(\mathbf{U}_M\). Passing to the limit in
\eqref{3f.7a}, we conclude that \(\mathbf{U}_M\) is a solution of the
discrete collocation system \eqref{3.7a}.

The uniqueness follows from the contraction property \eqref{Xi3-final}, or
equivalently from the Banach fixed-point theorem. Therefore, the discrete
collocation scheme admits a unique solution.
\end{proof}

\section{Numerical results}\label{sec6}
In this section, we present three numerical examples to assess the accuracy, robustness, and computational efficiency of the proposed  spectral method for right-sided fractional differential equations with weak terminal singularities. The test problems are selected to illustrate the performance of the method for both linear and nonlinear systems, including cases with rational and irrational fractional orders.

The numerical examples are arranged progressively. We begin with a linear benchmark problem for which a direct comparison with an existing method from the literature is available. This allows us to quantify the improvement in accuracy achieved by the present formulation. We then consider nonlinear coupled systems in order to investigate the  convergence  of the proposed scheme in more challenging settings, particularly in the presence of irrational fractional orders and nonlinear interactions.

For all examples, the numerical errors are measured in the weighted norm
$\corr{L^{2}_{\omega^{0,\beta-1,\mu}}},$ with $\mu= \dfrac{\beta}{\,\left\lceil \beta / 0.18 \right\rceil\,}$
which is naturally consistent with the endpoint-adaptive approximation space introduced in the theoretical development. 

\textcolor{black}{\begin{example}\label{ex1}
We begin with the following linear benchmark problem:
\begin{equation}
{}_{t}^{C} D_{1}^{\beta} U(t) = U(t) + g(t), 
\quad t \in (0,1], 
\quad U(1) = 0,
\end{equation}
where
\begin{equation}
g(t) = \Gamma(1+\beta) - (1-t)^{\beta}, 
\quad 0 < \beta < 1.
\end{equation}
This problem admits the exact solution
\[
U(t) = (1-t)^{\beta}.
\]
\end{example}
\begin{table}
\centering
\caption{\textcolor{black}{Comparison of the weighted $\corr{L^{2}_{\omega^{0,\beta-1,\mu}}}$ error norms corresponding to the fractional orders $\beta=0.7,\,0.5,\,0.3$ in Example~\ref{ex1}.}}
\label{t1}
\begin{tabular}{@{}llll@{}}
\toprule
\multicolumn{4}{c}{The scheme \cite{tian2022high}} \\
\cmidrule{1-4}
$M$ & $\beta=0.7$ & $\beta=0.5$ & $\beta=0.3$ \\
\cmidrule{1-4}
64   & $4.5892\times 10^{-2}$ & $7.6437\times 10^{-2}$ & $9.1231\times 10^{-2}$ \\
128  & $2.8976\times 10^{-2}$ & $5.7148\times 10^{-2}$ & $8.4961\times 10^{-2}$ \\
256  & $1.8112\times 10^{-2}$ & $4.1959\times 10^{-2}$ & $7.6174\times 10^{-2}$ \\
512  & $1.1253\times 10^{-2}$ & $3.0444\times 10^{-2}$ & $6.6599\times 10^{-2}$ \\
1024 & $6.9671\times 10^{-3}$ & $2.1915\times 10^{-2}$ & $5.7213\times 10^{-2}$ \\
\cmidrule{1-4}
\multicolumn{4}{c}{Proposed scheme} \\
\cmidrule{1-4}
$M$ & $\beta=0.7$ & $\beta=0.5$ & $\beta=0.3$ \\
\cmidrule{1-4}
4  & $4.61605\times10^{-7}$  & $3.17694\times10^{-6}$  & $6.04998\times10^{-7}$ \\
8  & $1.55596\times10^{-10}$ & $6.38633\times10^{-11}$ & $9.43488\times10^{-10}$ \\
12 & $2.58913\times10^{-12}$ & $1.45948\times10^{-15}$ & $6.74423\times10^{-13}$ \\
16 & $1.23770\times10^{-13}$ & $5.06719\times10^{-16}$ & $2.04339\times10^{-14}$ \\
20 & $1.10689\times10^{-14}$ & $1.60550\times10^{-16}$ & $6.08317\times10^{-16}$ \\
\cmidrule{1-4}
\end{tabular}
\end{table}
Table~\ref{t1} provides a direct comparison between the proposed  spectral scheme and the method reported in \cite{tian2022high} for the linear benchmark problem. The results clearly demonstrate the superior performance of the present approach  using  a very small number of modes. This substantial reduction in the approximation error is a direct consequence of the singularity-preserving basis construction. Since the exact solution exhibits the weak terminal singular behavior, standard polynomial-based approximations are not expected to retain their spectral accuracy. By contrast, the present method explicitly incorporates the endpoint singularity into the approximation space, leading to a dramatic improvement in convergence. Moreover, the method performs uniformly well for all tested values of $\beta$. These results strongly confirm the effectiveness of the proposed formulation for linear right-sided fractional differential equations with non-smooth solutions.
}

%%%ex 2

 \textcolor{black}{\begin{example}\label{ex3} We consider the following linear system of right-sided fractional differential equations \cite{jankowski2015systems}: 
 \begin{equation}\label{eq7.2} \begin{cases} {}_{t}^{C}D_{1}^{\beta}\, U_1(t) = U_1(t)+U_2(t)+\sigma_1(t), \qquad t\in[0,1),\;\beta\in(0,1),\\[6pt] {}_{t}^{C}D_{1}^{\beta}\, U_2(t) = U_1(t)+U_2(t)+\sigma_2(t), \qquad t\in[0,1),\;\beta\in(0,1),\\[6pt] U_1(1)=0,\qquad U_2(1)=0, \end{cases} \end{equation} where  \[ \sigma_1(t) = \frac{\Gamma(3\beta+1)} {\Gamma(2\beta+1)} (1-t)^{2\beta} - (1-t)^{\beta} - (1-t)^{3\beta}, \] \[ \sigma_2(t) = \Gamma(\beta+1) - (1-t)^{\beta} - (1-t)^{3\beta}, \] and the terminal conditions are selected so that the system admits the exact solution \[ U_1(t)=(1-t)^{3\beta}, \qquad U_2(t)=(1-t)^{\beta}. \]   \end{example}     
 The numerical results in Tables~\ref{t4} and~\ref{t5}  confirm the excellent approximation properties of the proposed  spectral method. For all fractional order values of $\beta$, the computed weighted errors rapidly decrease  as $M$ increases.  A particularly notable observation is the strong performance of the method for the more singular cases, where the exact solutions possess pronounced endpoint singularities near $t=1$. The present basis successfully captures this behavior, thereby preserving the spectral convergence rate. 
\begin{table}
\centering
\caption{Numerical values of the $\corr{L^{2}_{\omega^{0,\beta-1,\mu}}}$-norm errors associated with $U_1(t)$ for $\beta=\sqrt{3}/2,\ \sqrt{3}/3,\ \sqrt{3}/4$ for Example~\ref{ex3}.}
\label{t4}
\begin{tabular}{@{}lccc@{}}
\toprule
$M$ & $\beta=\sqrt{3}/2$ & $\beta=\sqrt{3}/3$ & $\beta=\sqrt{3}/4$ \\
\midrule
10 & $3.67775\times 10^{-6}$ & $4.56276\times 10^{-7}$ & $8.72594\times 10^{-11}$ \\
15 & $7.64710\times 10^{-14}$ & $5.01912\times 10^{-15}$ & $4.09877\times 10^{-14}$ \\
20 & $3.44958\times 10^{-15}$ & $1.16949\times 10^{-16}$ & $9.36062\times 10^{-16}$ \\
25 & $3.01758\times 10^{-16}$ & $6.12509\times 10^{-18}$ & $4.85666\times 10^{-17}$ \\
\bottomrule
\end{tabular}
\end{table}
\begin{table}
\centering
\caption{Numerical values of the $\corr{L^{2}_{\omega^{0,\beta-1,\mu}}}$-norm errors associated with $U_2(t)$ for $\beta=\sqrt{3}/2,\ \sqrt{3}/3,\ \sqrt{3}/4$ for Example~\ref{ex3}.}
\label{t5}
\begin{tabular}{@{}lccc@{}}
\toprule
$M$ & $\beta=\sqrt{3}/2$ & $\beta=\sqrt{3}/3$ & $\beta=\sqrt{3}/4$ \\
\midrule
10 & $3.67775\times 10^{-6}$ & $4.56276\times 10^{-7}$ & $8.63266\times 10^{-11}$ \\
15 & $1.40945\times 10^{-13}$ & $5.74016\times 10^{-15}$ & $4.17436\times 10^{-14}$ \\
20 & $6.35797\times 10^{-15}$ & $1.33621\times 10^{-16}$ & $9.53423\times 10^{-16}$ \\
25 & $5.56174\times 10^{-16}$ & $6.99826\times 10^{-18}$ & $4.94673\times 10^{-17}$ \\
\bottomrule
\end{tabular}
\end{table}}

% ex 3

We next consider a nonlinear coupled system in order to assess the robustness of the proposed method for nonlinear right-sided fractional systems with irrational fractional orders.

\textcolor{black}{\begin{example}\label{ex2}
Consider the nonlinear fractional system \cite{jankowski2015systems}:
\begin{equation}\label{ex2.1}
\begin{cases}
{}_{t}^{C}D_{1}^{\beta}\, U_1(t)
=
\sin\!\big(U_1(t)\big)
+
\cos\!\big(U_2(t)\big)
+
f_1(t),
\quad U_1(1)=0,\\[6pt]
{}_{t}^{C}D_{1}^{\beta}\, U_2(t)
=
\sin^{2}\!\left(\dfrac{1}{2}U_1(t)\right)
+
2U_2(t)
+
f_2(t),
\quad U_2(1)=0,
\end{cases}
\end{equation}
where $t\in(0,1)$, $\beta\in(0,1)$, and the source terms $f_1(t)$ and $f_2(t)$ are defined by
\begin{equation}\label{ex2.1-f1}
\begin{aligned}
f_1(t)=\;&
\frac{\Gamma(3\beta+1)}{\Gamma(2\beta+1)}(1-t)^{2\beta}
+
\frac{6}{\Gamma(4-\beta)}(1-t)^{3-\beta} \\
&-
\sin\!\Big((1-t)^{3\beta}+(1-t)^3\Big)
-
\cos\!\Big((1-t)^{\beta}+(1-t)^{2+\beta}\Big),
\end{aligned}
\end{equation}
and
\begin{equation}\label{ex2.1-f2}
\begin{aligned}
f_2(t)=\;&
\Gamma(\beta+1)
+
\frac{\Gamma(3+\beta)}{2}(1-t)^2 \\
&-
\sin^{2}\!\left(\frac{1}{2}\Big((1-t)^{3\beta}+(1-t)^3\Big)\right)
-
2\Big((1-t)^{\beta}+(1-t)^{2+\beta}\Big).
\end{aligned}
\end{equation}
\end{example}
The exact solution is given as
\begin{equation}\label{ex2.1-exact}
U_1(t)=(1-t)^{3\beta}+(1-t)^3,
\qquad
U_2(t)=(1-t)^{\beta}+(1-t)^{2+\beta}.
\end{equation}
\begin{table}
\centering
\caption{Numerical values of the $\corr{L^{2}_{\omega^{0,\beta-1,\mu}}}$-norm errors associated with $U_1(t)$ for $\beta=\sqrt{3}/2,\ \sqrt{3}/3,\ \sqrt{3}/4$ for Example~\ref{ex2}.}
\label{t2}
\begin{tabular}{@{}lccc@{}}
\toprule
$M$ & $\beta=\sqrt{3}/2$ & $\beta=\sqrt{3}/3$ & $\beta=\sqrt{3}/4$ \\
\midrule
10 & $1.03863\times 10^{-5}$ & $7.28310\times 10^{-5}$ & $9.64950\times 10^{-5}$ \\
15 & $2.20443\times 10^{-10}$ & $3.31458\times 10^{-8}$ & $4.75540\times 10^{-8}$ \\
20 & $3.03739\times 10^{-15}$ & $1.67503\times 10^{-14}$ & $2.42205\times 10^{-14}$ \\
25 & $2.65700\times 10^{-16}$ & $5.30231\times 10^{-18}$ & $2.49176\times 10^{-17}$ \\
\bottomrule
\end{tabular}
\end{table}
\begin{table}
\centering
\caption{Numerical values of the $\corr{L^{2}_{\omega^{0,\beta-1,\mu}}}$-norm errors associated with $U_2(t)$ for $\beta=\sqrt{3}/2,\ \sqrt{3}/3,\ \sqrt{3}/4$ for Example~\ref{ex2}.}
\label{t3}
\begin{tabular}{@{}lccc@{}}
\toprule
$M$ & $\beta=\sqrt{3}/2$ & $\beta=\sqrt{3}/3$ & $\beta=\sqrt{3}/4$ \\
\midrule
10 & $2.33471\times 10^{-5}$ & $1.44425\times 10^{-4}$ & $1.91758\times 10^{-4}$ \\
15 & $2.14351\times 10^{-10}$ & $1.75928\times 10^{-8}$ & $2.72031\times 10^{-8}$ \\
20 & $9.35954\times 10^{-15}$ & $6.41618\times 10^{-15}$ & $1.17455\times 10^{-14}$ \\
25 & $8.18739\times 10^{-16}$ & $9.89657\times 10^{-18}$ & $4.23701\times 10^{-17}$ \\
\bottomrule
\end{tabular}
\end{table}
The numerical results reported in Tables~\ref{t2} and~\ref{t3} show that the weighted errors associated with  $U_1$ and $U_2$ decrease rapidly as the number of basis functions increases.  This behavior strongly indicates spectral-type convergence. More importantly, the method remains  highly accurate despite the nonlinear coupling and the presence of irrational fractional orders. This highlights one of the principal advantages of the present framework, namely, its ability to handle both rational and irrational orders.
}

\section{Conclusion}

In this work, we proposed a novel fractional Jacobi pseudospectral method for the numerical solution of low-regularity right-sided fractional systems of differential equations. The method is based on a specially designed family of fractional Jacobi basis functions that accurately preserves the terminal singular structure inherently generated by right-sided fractional derivatives. Owing to this singularity-preserving construction, the proposed scheme retains high-order accuracy even in the presence of weakly regular or non-smooth exact solutions, where standard spectral and finite-difference methods often exhibit a substantial loss of convergence. A rigorous theoretical analysis was established for the associated interpolation and collocation framework, from which explicit error estimates were derived. Moreover, the well-posedness of the resulting discrete system was proved, guaranteeing the existence and uniqueness of the numerical approximation. The proposed formulation successfully captures the intrinsic low-regularity behavior of the solution without requiring any smoothing transformations, singularity corrections, or auxiliary regularization procedures. The numerical experiments strongly support the theoretical results and demonstrate the efficiency, robustness, and high accuracy of the proposed method across a range of test problems with different regularity profiles. In particular, the method achieves highly accurate approximations with relatively few collocation points, confirming its computational effectiveness for right-sided fractional models.

\textcolor{black}{The present framework opens several promising directions for future research,
including extensions to multidimensional systems, nonlinear and tempered fractional
operators, distributed-order models, and inverse problems arising in fractional
dynamical systems. A further direction concerns the possible design of fast
evaluation variants of the proposed scheme, in the spirit of the fast methods
developed for left-sided Caputo derivatives
\cite{jiang2017fast,yan2017fast,zhang2022fast,fang2020fast,huang2022unified}. As the
present pseudospectral formulation is a global, non-marching scheme, it does not
rely on the time-history recursion that motivates such fast methods in the
finite-difference setting; whether an analogous acceleration strategy could be
formulated for the present right-sided framework, and what efficiency gains it
might offer, is left for future investigation.}

\section*{Author contributions}
I.G.A., M.H., and M.R. designed the research.
M.A.Z. conducted the numerical experiments and prepared the first draft of the manuscript.
I.G.A., M.H., M.R., and M.A.Z. developed the analysis and reviewed the manuscript.

\section*{Data availability statement}
No datasets were generated or analyzed during the current study.

\section*{Declarations}

\section*{Conflict of interest}
The authors declare that they have no conflict of interest.

	\section*{Funding} This work was supported and funded by the Deanship of Scientific Research at Imam Mohammad Ibn Saud Islamic University (IMSIU) (grant number IMSIU-DDRSP2602).

% BibTeX users please use one of
%\bibliographystyle{spbasic}      % basic style, author-year citations
\bibliographystyle{elsart-num-sort}
		%% \bibliography{mrabbrev,ref,ref2}
		
		%%%%%%%%%%%%%%%%%%%%%%%%%%%%%%%
		
		\bibliography{Bibfileamc}

@article{tian2022high,
  title={A high-order numerical scheme for right Caputo fractional differential equations with uniform accuracy.},
  author={Tian, Li and Wang, Ziqiang and Cao, Junying},
  journal={Electronic Research Archive},
  volume={30},
  number={10},
  year={2022}
}

@article{zhou2016well,
  title={Well-posedness of a kind of nonlinear coupled system of fractional differential equations},
  author={Zhou, XiaoJun and Xu, ChuanJu},
  journal={Science China Mathematics},
  volume={59},
  number={6},
  pages={1209--1220},
  year={2016},
  publisher={Springer}
}

@article{zayernouri1945spectral,
  title={Spectral and spectral element methods for fractional ordinary and partial differential equations},
  author={Zayernouri, Mohsen and Wang, Li-Lian and Shen, Jie and Karniadakis, George Em},
  journal={Cambridge Monographs on Applied and Computational Mathematics (Cambridge University Press, 2024)},
  year={1945}
}

@article{mokhtary2020computational,
  title={A computational approach for the non-smooth solution of non-linear weakly singular Volterra integral equation with proportional delay},
  author={Mokhtary, Payam and Moghaddam, Behrouz Parsa and Lopes, Ant{\'o}nio M and Machado, JA Tenreiro},
  journal={Numerical algorithms},
  volume={83},
  number={3},
  pages={987--1006},
  year={2020},
  publisher={Springer}
}

@article{sgura2014finite,
  title={A finite difference approach for the numerical solution of non-smooth problems for Boundary Value ODEs},
  author={Sgura, Ivonne},
  journal={Mathematics and Computers in Simulation},
  volume={95},
  pages={146--162},
  year={2014},
  publisher={Elsevier}
}

@book{podlubny1998fractional,
  title={Fractional differential equations: an introduction to fractional derivatives, fractional differential equations, to methods of their solution and some of their applications},
  author={Podlubny, Igor},
  volume={198},
  year={1998},
  address		= {Elsevier},
  publisher={Elsevier}
}

@article{zaky2020priori,
  title={A priori error estimates of a {J}acobi spectral method for nonlinear systems of fractional boundary value problems and related {V}olterra-{F}redholm integral equations with smooth solutions},
  author={Zaky, Mahmoud A and Ameen, Ibrahem G},
  journal={Numerical Algorithms},
  volume={84},
  number={1},
  pages={63--89},
  year={2020},
  publisher={Springer}
}

@article{zayernouri2013fractional,
  title={Fractional {S}turm--{L}iouville eigen-problems: theory and numerical approximation},
  author={Zayernouri, Mohsen and Karniadakis, George Em},
  journal={Journal of Computational Physics},
  volume={252},
  pages={495--517},
  year={2013},
  publisher={Elsevier}
}

@article{zaky2020accurate,
  title={An accurate spectral collocation method for nonlinear systems of fractional differential equations and related integral equations with nonsmooth solutions},
  author={Zaky, Mahmoud A},
  journal={Applied Numerical Mathematics},
  volume={154},
  pages={205--222},
  year={2020},
  publisher={Elsevier}
}

@article{zaky2020semi,
  title={Semi-implicit {G}alerkin--{L}egendre spectral schemes for nonlinear time-space fractional diffusion--reaction equations with smooth and nonsmooth solutions},
  author={Zaky, Mahmoud A and Hendy, Ahmed S and Mac{\'\i}as-D{\'\i}az, Jorge E},
  journal={Journal of Scientific Computing},
  volume={82},
  pages={1--27},
  year={2020},
  publisher={Springer}
}

@book{tarasov2011fractional,
  title={Fractional dynamics: applications of fractional calculus to dynamics of particles, fields and media},
  author={Tarasov, Vasily E},
  year={2011},
  publisher={Springer Science \& Business Media}
}

@article{caboussat2022least,
  title={A least-squares method for the solution of the non-smooth prescribed Jacobian equation},
  author={Caboussat, Alexandre and Glowinski, Roland and Gourzoulidis, Dimitrios},
  journal={Journal of Scientific Computing},
  volume={93},
  number={1},
  pages={15},
  year={2022},
  publisher={Springer}
}

@article{faghih2022non,
  title={Non-linear system of multi-order fractional differential equations: theoretical analysis and a robust fractional Galerkin implementation},
  author={Faghih, Amin and Mokhtary, P},
  journal={Journal of Scientific Computing},
  volume={91},
  number={2},
  pages={35},
  year={2022},
  publisher={Springer}
}

@article{zheng2020numerical,
  title={Numerical approximations for the variable coefficient fractional diffusion equations with non-smooth data},
  author={Zheng, Xiangcheng and Ervin, Vincent J and Wang, Hong},
  journal={Computational Methods in Applied Mathematics},
  volume={20},
  number={3},
  pages={573--589},
  year={2020},
  publisher={De Gruyter}
}

@article{faghih2025spectral,
  title={On a spectral solver for highly oscillatory and non-smooth solutions of a class of linear fractional differential systems},
  author={Faghih, Amin},
  journal={arXiv preprint arXiv:2511.06410},
  year={2025}
}

@article{zaky2025improved,
  title={Improved spectral method for the nonsmooth solutions of nonlinear fractional differential equations},
  author={Zaky, Mahmoud A and Abu Arqub, Omar},
  journal={Journal of Applied Mathematics and Computing},
  pages={1--18},
  year={2025},
  publisher={Springer}
}

@article{ameen2021pseudo,
  title={A pseudo-spectral scheme for systems of two-point boundary value problems with file and right sided fractional derivatives and related integral equations},
  author={Ameen, IG and Elkot, NA and Zaky, MA and Hendy, AS and Doha, EH},
  journal={Computer Modeling in Engineering \& Sciences},
  volume={128},
  number={1},
  pages={21--41},
  year={2021},
  publisher={Tech Science Press}
}

@article{zaky2024non,
  title={Non-polynomial collocation spectral scheme for systems of nonlinear Caputo--Hadamard differential equations},
  author={Zaky, Mahmoud A and Ameen, Ibrahem G and Babatin, Mohammed and Akg{\"u}l, Ali and Hammad, Magda and Lopes, Ant{\'o}nio M},
  journal={Fractal and Fractional},
  volume={8},
  number={5},
  pages={262},
  year={2024},
  publisher={MDPI}
}

@article{jankowski2015systems,
  title={Systems of nonlinear fractional differential equations},
  author={Jankowski, Tadeusz},
  journal={Fractional Calculus and Applied Analysis},
  volume={18},
  number={1},
  pages={122--132},
  year={2015},
  publisher={Springer}
}

@article{shiri2021terminal,
  title={Terminal value problems for the nonlinear systems of fractional differential equations},
  author={Shiri, Babak and Wu, Guo--Cheng and Baleanu, Dumitru},
  journal={Applied Numerical Mathematics},
  volume={170},
  pages={162--178},
  year={2021},
  publisher={Elsevier}
}

@article{zaky2025high,
  title={High-order fractional spectral collocation method for the nonsmooth solutions of nonlinear right-sided Caputo terminal value problems},
  author={Zaky, Mahmoud A and Ameen, Ibrahem G and Arqub, Omar Abu and Doha, Eid H},
  journal={Fractional Calculus and Applied Analysis},
  pages={1--22},
  year={2025},
  publisher={Springer}
}

@article{jiang2017fast,
  title={Fast evaluation of the Caputo fractional derivative and its applications to fractional diffusion equations},
  author={Jiang, Shidong and Zhang, Jiwei and Zhang, Qian and Zhang, Zhimin},
  journal={Communications in Computational Physics},
  volume={21},
  number={3},
  pages={650--678},
  year={2017},
  publisher={Cambridge University Press}
}

@article{yan2017fast,
  title={Fast evaluation of the Caputo fractional derivative and its applications to fractional diffusion equations: a second-order scheme},
  author={Yan, Yonggui and Sun, Zhi-Zhong and Zhang, Jiwei},
  journal={Communications in Computational Physics},
  volume={22},
  number={4},
  pages={1028--1048},
  year={2017},
  publisher={Cambridge University Press}
}

@article{zhang2022fast,
  title={Fast Second-Order Evaluation for Variable-Order Caputo Fractional Derivative with Applications to Fractional Sub-Diffusion Equations},
  author={Zhang, Jia-Li and Fang, Zhi-Wei and Sun, Hai-Wei},
  journal={Numerical Mathematics: Theory, Methods and Applications},
  volume={15},
  number={1},
  pages={200--226},
  year={2022}
}

@article{fang2020fast,
  title={A fast method for variable-order Caputo fractional derivative with applications to time-fractional diffusion equations},
  author={Fang, Zhi-Wei and Sun, Hai-Wei and Wang, Hong},
  journal={Computers \& Mathematics with Applications},
  volume={80},
  number={5},
  pages={1443--1458},
  year={2020},
  publisher={Elsevier}
}

@article{huang2022unified,
  title={A unified fast memory-saving time-stepping method for fractional operators and its applications},
  author={Huang, Yuxiang and Li, Qiaoge and Li, Rongxin and Zeng, Fanhai and Guo, Ling},
  journal={Numerical Mathematics: Theory, Methods and Applications},
  volume={15},
  number={3},
  pages={679--714},
  year={2022}
}

%\bibliography{Bibfileamc}% common bib file
%% if required, the content of .bbl file can be included here once bbl is generated
%%\input sn-article.bbl

\end{document}